\makeatletter \renewcommand{\fnum@figure}{Fig. \thefigure} \makeatother
\newtheorem{theorem}{Theorem}[section]
\newtheorem{corollary}[theorem]{Corollary}
\newtheorem{examples}[theorem]{Examples}
\newtheorem{lemma}[theorem]{Lemma}
\newtheorem{proposition}[theorem]{Proposition}
\newtheorem{remark}[theorem]{Remark}
\numberwithin{equation}{section}
\numberwithin{figure}{section}
\newcommand{\z} {\mathbb{Z}}
 \renewcommand{\r} {\mathbb{R}}
\newcommand{\La}{\Lambda}
\newcommand{\ra}{\rightarrow}
\begin{document}
\title[Some computations in string topology ]{Some computations in string topology}
\author{Arun Maiti}
\address{Arun Maiti, Department of Mathematics, Indian Institute of Science,
Bangalore 560 012}
\email{arunmaiti@iisc.ac.in}
\keywords{String topology, Hochschild homology, Geodesics}
\subjclass[2010]{ 05B45 }
\maketitle

\begin{abstract}
 In this paper, we discuss Hochschild chain models for some of the string topology operations. We use these models to simplify the proofs and computations of some of the results in string topology. Along the way we also make some new observations. We further discuss how nonnilpotent local level homology classes with respect to the Chas-Sullivan and the Goresky-Hingston product detect closed geodesics with optimal index growth rates.
\end{abstract}

\section{Introduction}

String topology is the study of algebraic structures on homology of free loop spaces, pioneered by M. Chas and S. Sullivan \cite{CS99}. These structures are usually referred to as the string topology operations. For a compact Riemannian manifold $M$ of dimension $n$, the free loop space of $M$, $\Lambda(M)$ is the space of all the continuous maps from the circle to the manifold $M$. In geometric settings, however, it is more convenient to work with the completion of all piecewise smooth curves from the circle to the manifold, 
 \[\Lambda(M):=H^1 ( S^1, M)\cong \{ H^{1} ( \mathbb{R}/\mathbb{Z}, M))\}.\]
 It admits the structure of a Hilbert manifold \cite{Kl78}. We denote $\Lambda(M)$ simply by $\Lambda$ when there is no confusion about the underlying manifold $M$. Among the string operations, here we consider the Chas-Sullivan product, the degenerate coproducts and the Goresky-Hingston product. 
\newline
Since their discovery, many different interpretations of the string operations have been given by various authors, e.g., see \cite{CJ02} for a homotopy theoretic interpretation, \cite{CS08} for a Morse theoretic and \cite{M04} for a De Rahm theoretic interpretation. Here we discuss Hochschild chain models for the above mentioned string operations. 
 \newline
 We apply the algebraic models to establish certain properties of the operations. In particular, we determine the ring structure for monogenic manfiolds with respects tot he Goresky-Hingston product. We further use a mix of geometric and algebraic methods to study the properties of the Goresky-Hingston product and compute it for the sphere and describe some generators for the complex projective spaces. 
\newline
A closed geodesic is an element of $\Lambda$ that is also a geodesic. In order to investigate the existence and properties of closed geodesics, geometers often like to view them as critical points of energy functional on the free loop space. The energy and length of an element $\alpha \in \Lambda$ are defined respectively by 
\begin{equation}
 E(\alpha)= \int_{\mathbb{R}/\mathbb{Z}} g(\alpha'(t), \alpha'(t)) dt \ \ 
\mbox{and} \ \ L(\alpha)= \int_{\mathbb{R}/\mathbb{Z}} \sqrt{g(\alpha'(t), \alpha'(t))} dt. 
\end{equation}
 There is a natural $O(2)=SO(2) \times \{\pm 1\}$ action on the free loop space defined by 
\[O(2) \times \Lambda \ra \Lambda, (e^{2 \pi it}, \gamma) \ra \beta, \beta(s)= \gamma(s+t); (-1, \gamma) \ra \beta, \beta(s)= \gamma(-s).\] 
So, each closed geodesics $\gamma$ corresponds to a $O(2)$ family of closed geodesics, called the \textit{orbit} of $\gamma$ and denoted 
by $\bar{\gamma}$. 
Each closed geodesic $\gamma$ is also associated with an infinite family of closed geodesics, by 
taking iterates of $\gamma$, $\gamma^m$ for $m=\pm 1, \pm 2, \cdots $. So, the circle action and 
the iterates of a geodesic produces geometrically the same object, meaning, their images are the same
subset of $M$. 
\newline
 A closed geodesic is said to be \textit{isolated} if it represents an isolated point in the equivariant free loop space $\Lambda/O(2)$. It was shown in \cite{GH09} that the products extends to level homologies $H_*(\La^{\leq a}, \La^{<a})$ and local level homologies of isolated closed geodesics, $H_*(\La^{< a} \cup \bar{\gamma}, 
\La^{< a})$, when the levels are defined with respect to the square root of the energy functional 
$F=\sqrt E$, i.e., 
 
\[\La^{\leq a}= \{\alpha \in \Lambda | \ F(\alpha)\leq a \} \hspace{0.5 cm} \mathrm{and} \hspace{0.5 
cm}
\La^{< a}= \{\alpha \in \Lambda | \ F(\alpha)< a\}. \]

The critical points of $F$ are the same as of $E$. The index of a closed geodesic 
$\gamma$ is the dimension of a maximal subspace of the tangent space $T_{\gamma}(\La)$ at $ 
\gamma$ of $\La$ on which the Hessian $d^2F( \gamma)$ is negative definite, and the nullity of $
\gamma$ is $ \dim T^0_{\gamma}(\La)-1$, where $T^0_{\gamma}$ is the null space of the Hessian 
$d^2F(\gamma)$. The $-1$ is incorporated to account for the fact that every non-constant closed 
geodesic $\gamma$ occurs in an $S^1$ orbit of closed geodesics. 
A closed geodesic is said to be \textit{nondegenerate} if its nullity is zero.
 \newline
 The behaviour of the local level homologies with respect to the string products in the nondegenerate case were discussed in \cite{GH09}. In \S \ref{applications}, we discuss the case of isolated but possibly degenerate closed geodesics. We prove in Theorem \ref{second} that if the local level homology class of such geodesic is nonnilpotent with respect to the Chas-Sullivan product then $\mathrm{index}+ \mathrm{nullity} $ of the iterates of the geodesic has minimal possible growth rate, as stipulated by Bott's index estimate \ref{indexgrowth}. In a similar way we prove in Theorem \ref{third} that if the local level cohomology class of such geodesic is nonnilpotent with respect to the Goresky-Hingston product then $\mathrm{index}$ of the iterates of the geodesic has maximal possible growth rate.
 \newline
 The problem of showing the existence of infinitely many geometrically distinct closed geodesics on any Riemannian manifold is usually referred to as the closed geodesic problem. Here closed 
geodesics are always understood to be non-constant. In \S \ref{applications}, we review some of the important results on this problem. The main approach to resolve this problem has been to use topological complexity of the free loop space to force the existence of critical points of the energy 
functional. Topological complexity of a space often reflects on its homology groups (for instance, on 
the growth of Betti numbers) and algebraic structures on them. So, string topology is expected to shed some light on this problem. Indeed, we shall see in \S \ref{applications} that the Theorems
\ref{second} and \ref{third} provide criteria for the existence of infinitely many geometrically distinct closed geodesics.

\section{String topology operations }
In this section, we recall the definitions of the string operations following the original description given by M. Chas and D. Sullivan in \cite{CS99, Su03}. \newline
We have the standard evaluation mappings defined by
 \[ ev_s: \Lambda(M) \rightarrow M, \ ev_s(\alpha)= \alpha(s).\]

Recall that for an embedding $V_1 \hookrightarrow V_2$ of co-dimension $m$, where $V_1$ and $V_2$ finite dimensional manifolds, the \textit{Gysin map} is the composition

\[ H_{p}(V_2) \rightarrow H_{p}(V_2, V_2 -V_1) \cong H_{p}(N, N -V_1) \xrightarrow{\cap \mu} H_{p-m}(V_1),\]

where $N$ is a tubular neighbourhood of
$V_1$ in $V_2$, and $\mu$ is the Thom class of the normal bundle $N \rightarrow V_1 $. \newline
For $t \in (0,1)$, let \[\Theta_t =\{\alpha \in \Lambda | \alpha(0)=\alpha(t)\}, \mathrm{\ and \ the \ map} \quad cut_t: \Theta_{t} \rightarrow \Lambda
\times \Lambda \] cuts a loop at time $t$, and \[\Theta =\{(\alpha, \beta) \in \Lambda \times \Lambda |
\alpha(0)=\beta(0)\}, \mathrm{\ and \ the \ map} \quad concat: \Theta \rightarrow \Lambda \] denotes the concatenation. In \cite{Su03}, the authors showed that the Gysin map can also be defined for the embeddings \[\Theta_t \hookrightarrow \Lambda \
(\mathrm{codim} \ n) \ \mathrm{and}\ \ \Theta \hookrightarrow \Lambda \times \Lambda \ (\mathrm{codim} \ n).\]
Then the maps
\[ \Lambda \times \Lambda \xhookleftarrow{ }\Theta \xrightarrow{concat} \Lambda,\] induces the \textit{Chas-Sullivan product}, denoted by $\bullet$,

\begin{equation}
 H_{i}(\Lambda) \times H_{j}(\Lambda) \xrightarrow{EZ} H_{i+j}(\Lambda \times \Lambda)
\xrightarrow{\mathrm{Gysin \ map}} H_{i+j-n}(\Theta) \xrightarrow{concat_*} H_{i+j-n}(\Lambda),
 \end{equation}
 where EZ is the Eilenberg-Zilber map. We also have the maps
 \[ \Lambda \xhookleftarrow{\mathrm{codim} \ n } \Theta_t \xrightarrow{cut_{t}} \Lambda \times \Lambda \quad
\mbox{for} \ t \in (0, 1), \]
or equivalently, the maps
\begin{equation}\label{cut} \Lambda \xrightarrow{\varGamma_t} \Lambda \xhookleftarrow{\mathrm{codim} \ n }
\Theta_{\frac{1}{2}} \xrightarrow{cut_{\frac{1}{2}}} \Lambda \times \Lambda, \quad \mbox{for} \ t \in [0, 1],
\end{equation}

where $\varGamma_t: \Lambda \rightarrow \Lambda $ is given by
\begin{equation}
 \varGamma_{t}(\gamma)(s)=
 \begin{cases}
 \gamma(\frac{1}{2}st), & \text{for}\ 0 \leq s \leq \frac{1}{2} \\
 \gamma(2t(1-s)+ 2(s-\frac{1}{2}), & \text{for}\ \frac{1}{2} \leq s \leq 1
 \end{cases}
\end{equation}
Note that $cut_{\frac{1}{2}} \circ \varGamma_t = cut_t$ on $\Theta_t$. So the above maps induce the chain maps

\begin{equation}
 P_t: C^{i}(\Lambda \times \Lambda) \xrightarrow{(cut_{\frac{1}{2}})^{\#} } C^{i}(\Theta_{\frac{1}{2}})
\xrightarrow{Gysin \ map} C^{i+n}(\Lambda) \xrightarrow{ \varGamma_{t}^{\#}} C^{i+n}(\Lambda).
\label{Pt}
\end{equation}

$P_t$ induces products on cohomology, also known as \textit{degenerate string product}

\begin{equation}\label{degenerate product}\vee_t: H^{i}(\Lambda) \times H^{j}(\Lambda) \xrightarrow{\times} H^{i+j}(\Lambda \times \Lambda) \xrightarrow{(P_t)_*} H^{i+j+n}(\Lambda),
\end{equation}
where $\times$ denote the cross product.
\newline
 We see that for any two values $t_1, t_2 \in [0, 1]$ the chain maps $P_{t_1}$ and $P_{t_2}$ are homotopic. A homotopy between $ P_{t_1}$ and $P_{t_2}$ can be defined by varying $t$ between $t_1$ and $t_2$. Therefore, for all $t \in [0, 1]$, $\vee_t$ define the same product on cohomology. 
 \newline
 Let $P:C^{i}(\Lambda \times \Lambda) \times C^{i}(\Lambda \times \Lambda)$ be the chain homotopy such that $P \delta + \delta P= P_0-P_1$. However, $P_0$, $P_1$ vanishes on $C^*(\Lambda, \Lambda_0)$ (see Remark \ref{degeneracy}), cochains relative to constant loops. Thus $P$ induces a product on cohomology, denoted by $\circledast$,
 \[H^i(\Lambda, \Lambda_0) \times H^ j(\Lambda,
\Lambda_0) \xrightarrow{\circledast} H^{ i+j+n-1}(\Lambda,
\Lambda_0),\]
 known as the \textit{Goresky-Hingston product.}
 \newline 
Let us denote by $ev^{\Lambda}_0$ the composition $ \Lambda \xrightarrow{ev_0} M \hookrightarrow \Lambda$. We have the following alternative description of the products $\vee_0$ and $\vee_1$.
 
\begin{lemma} \label{morse}
The homomorphisms $\vee_0$ and $\vee_1 $ can equivalently be described as the composition
\begin{equation}
 H^{i}(\Lambda) \times H^{j}(\Lambda) \xrightarrow{\times} H^{i+j}(\Lambda \times \Lambda) \xrightarrow{} H^{i+j}(\Theta) \xrightarrow{\tau} H^{i+j+n}(\Lambda \times \Lambda) \xrightarrow{ (ev^{\Lambda}_0 \times id)^* } H^{i+j+n}(\Lambda)
\label{P0}
\end{equation}
and
\begin{equation}
 H^{i}(\Lambda) \times H^{j}(\Lambda) \xrightarrow{\times} H^{i+j}(\Lambda \times \Lambda) \xrightarrow{} H^{i+j}(\Theta) \xrightarrow{\tau} H^{i+j+n}(\Lambda \times \Lambda) \xrightarrow{ ( id \times ev^{\Lambda}_0)^* } H^{i+j+n}(\Lambda)
\label{P1}
\end{equation}
respectively, where $\tau$ is the Gysin map for the embedding $ \Theta \hookrightarrow \Lambda \times \Lambda $.
\end{lemma}

\begin{proof}
Let $E$ and $\bar{E}$ be the normal bundles of the embeddings $ \Theta_{\frac{1}{2}} \hookrightarrow \Lambda$ and $ \Theta \hookrightarrow \Lambda \times \Lambda$ respectively, obtained by pulling back the normal bundle of the embedding $ M \hookrightarrow M\times M$. Since $cut_{\frac{1}{2}}: \Theta_{\frac{1}{2}} \rightarrow \Theta$ is a homeomorphism, therefore, there is a bundle isomorphism $h: E \rightarrow \bar{E}$. Let $\Psi_{\Lambda}: E \rightarrow N \subset \Lambda$ and $\Psi_{\Lambda \times \Lambda}: \bar{E} \rightarrow \bar{N} \subset \Lambda \times \Lambda $ be the corresponding tubular neighbourhoods, then $h$ restricts to a homeomorphism from $N$ to $\bar{N}$, and the following diagram commutes:
\begin{equation} \label{tubular}
 \begin{CD}
 \Theta_{\frac{1}{2}}@> i >>N@< \varGamma_0 << \Lambda \\
 @VV cut_{\frac{1}{2}} V @VVh V @VV id V\\
 \Theta @> \bar{i} >>\bar{N}@< (ev^{\Lambda}_0 \times id) << \Lambda
 \end{CD}
 \end{equation}
where, $i$ and $\bar{i}$ are inclusions as zero section. Let $\pi$ denote the Gysin map for the embedding $ \Theta \hookrightarrow \Lambda \times \Lambda $. Then, we see that the homomorphism $ \varGamma^*_0 \circ \pi $ is nothing but the homomorphism $ (ev^{\Lambda}_0 \times id)^* \circ \tau$. Similarly, the homomorphisms $\varGamma^*_0 \circ \pi $ and $ (id \times ev^{\Lambda}_0 )^* \circ \tau$ are the same. Thus the lemma follows. 
\end{proof}
\begin{remark}\label{degeneracy}
Since $ev_0$ and $ev_1$ define trivial maps on $C^ *(\Lambda, \Lambda_0)$, therefore it is easy to see from the above lemma that the chain maps $P_0$ and $P_1$ vanishes on $C^ *(\Lambda, \Lambda_0)$, consequently, so do the homomorphisms $\vee_0$ and $\vee_1$ on $H^ *(\Lambda, \Lambda_0)$.
\end{remark}
\section{Hochschild chain models}\label{algebriacmodel}
 In this section, we will discuss certain algebraic models for some of the string operations. Our model will rely on a cosimplicial model of the free loop space produced by J. Jones in \cite{JJ87} which we recall bellow.
\newline
\subsection{Hochschild homology}\label{hochschildchain}
 $A= k \oplus \bar{A} $ is an augmented unital differential associative k-algebra over a commutative ring $k$. The Hochschild chain complex $( CH_*(A), d)$ of a commutative differential graded algebra $A$ is defined to be $CH_*(A):= \oplus_{l \geq 0} A ^ {\otimes {l+1}} $ with differential $d= d_0 + d_1$ given by
 \[ d_0(a_0, a_1,\cdots, a_l])= \sum_{i=0}^{l}(-1)^{i} (a_0, a_1, \cdots, d_A(a_i), \cdots a_l ] \]
 \begin{equation}
 \begin{split}
 d_1 (a_0, a_1, a_2, \cdots, a_l) = \sum_{i=0}^{l-1} (-1)^{i}(a_0, a_1, \cdots, a_i a_{i+1},\cdots, a_{l}) \\+ (-1)^{l} ( a_l a_0, a_1, \cdots, a_{l-1})
 \end{split}
 \end{equation}
 $( CH_*(A), d)$ defines a chain complex and its homology $HH_*(A):= \mbox{ker}(d)/ \mbox{im} (d)$ is known as Hochschild homology of A.
 \newline
 One can write Hochschild complex $CH_*(A):= (A \otimes T(s\bar{A}), d)$, where $T(s\bar{A})$ denotes the free coalgebra generated by the graded vector space $s\bar{A}$ with $\bar{A}=\{A^i\}_{i\geq 1}$ and $(s\bar{A})^i=\bar{A}^{i+1}$. $\bar{A}$ is the kernel of the augmentation $\epsilon: A \rightarrow k$.
 We also have the relative Hochschild chain complex of $A$ defined by \[\tilde{CH}_*(A) = \oplus_{n\geq 1} A \otimes {\bar{A}}^{\otimes n}\] equipped with the Hochschild differential. There is an exact sequence

 \[ 0 \longrightarrow (A, d_A) \longrightarrow CH_*(A) \longrightarrow \tilde{CH}_*(A).\]

 The homology of $\tilde{CH}_*(A)$ is denoted by $\Tilde{HH}_*(A)$. 
 \newline
 \subsection{Jones Theorem}
We briefly recall a cosimplicial model for the free loop space and its relationship with the Hochschild homology established by J. Jones in \cite{JJ87}. 
Let $ \Delta^k=\{(t_1, t_2, \cdots, t_k)$ $\mid 0 \leq t_1 \leq \cdots $ $\leq t_k \leq 1 \} $ denote the standard $k$ -simplex. 
We define maps
 \[\mathrm{ev}_k: \Delta^k \times \Lambda \rightarrow M^{k+1}\]
 \[( t_1, \cdots, t_k, \alpha) \rightarrow (\alpha(0), \alpha(t_1), \cdots, \alpha(t_k)).\]

Let $\int_k $ be the composition
 \begin{equation}\label{jmorphi} C^*(M)^{\otimes(k+1)} \xrightarrow{Ev_k^*} C^*(\Delta^k \times \Lambda) \xrightarrow{./ [\Delta^k]} C^{*-k}(\Lambda),
 \end{equation}
 where $./ [\Delta^k]$ is the slant product by the canonical $k$-chain in chains of $\Delta^k$. Then Jones proved the following.
 \begin{theorem}\label{jones} For simply connected $M$, the homomorphism \[\int: CH_*(C^*(M)) \rightarrow C^*(\Lambda(M))\] induced by the homomorphisms $\int_k $ is a chain homotopy equivalence. It therefore induces an isomorphism $H_*(C^*(M))\cong H^*(\Lambda(M))$.
\end{theorem}
 Hochschild cohmology is defined by dualising the Hochschild chain complex identification
 $\mbox{Hom} (A^{\otimes i+1}, k) \cong \mbox{Hom}(A^{\otimes i}; A)$. We also have an isomorphism \[ HH^*(C^*(M)) \cong H_*(\Lambda).\]

\subsection{A chain model for the Chas-Sullivan product}
In [CJ02], R. Cohen and J. Jones proved the following result.
\begin{theorem}[\cite{CJ02}]For simply connected $M$, the isomorphism $\int$ is an isomorphism of graded rings $HH^*(C^*(M))\cong H_*(\Lambda(M))$ with the Chas-Sullivan product on $H_*(\Lambda(M )$ and the cup product on $HH^*(C^*(M))$.
\end{theorem}

\subsection{Chain models for the string coproducts and the Goresky-Hingston product}

It turns out that singular cochain algebra is not flexible enough to realize the string coproducts and the Goresky-Hingston product on Hochschild chain complex. However, it is possible to take a way around this problem.
\newline
First we recall that a morphism in some category of complexes is a \textit{quasi-isomorphism} if it induces an isomorphism in homology. Two objects are \textit{quasi-isomorphic} if they are related by a finite sequence of quasi-isomorphisms. It is known that if two CDGA, $A_1$ and $A_2$ are quasi-isomorphic then $HH_*(A_1) \cong HH_*(A_2)$, a fact that can be for instance deduced from Morita invariance of Hochschild homology. In light of these facts the following theorem from \cite{LS} crucial in constructing chain models for string operations in question.
\begin{theorem}[Poincaré Duality model]\label{ls} Let $M$ be simply connected
 $C^*(M)$ be singular cochain algebra over $\mathbb{Q}$. Then there is a CDGA $A$ over $\mathbb{Q}$ satisfying \newline
1) $A$ is quasi-isomorphic to $C^*(M)$, \newline
2) $A$ satisfies Poincaré duality in dimension $n$.
\end{theorem}
If $A$ be a CDGA satisfying Poincaré duality in dimension $n$ then $A^k \cong A^{n-k}$ and $A^i=0$ for $i \geq n$. Since $A$ is of finite type i.e. finitely generated $\mathbb{Q}$ module, this implies that $A$ is finite dimensional, so it admits a homogeneous finite basis $\{ a_i\}_{1 \leq i \leq N}$. If $\{a_i\}_{1 \leq i \leq N}$ is a homogeneous basis of $A$ then there is a unique basis $\{a_i^*\}_{1 \leq i \leq N}$ of $A$ characterised by the equations
\[\langle a_i, a_i^* \rangle= \delta_{ij},\]
where $ \delta_{ij}$ is the Kronecker symbol, is called the Poincaré dual basis of $\{a_i\}_{1 \leq i \leq N}$. We note that a Poincaré dual basis of $A$ is also homogeneous with $\deg(a^*_i)=n - deg(a_i)$.
\newline
In \cite{LS04} the authors shows that diagonal class exists for any CDGA satisfying Poincaré duality.
 \begin{proposition}\label{ls1} Let $A$ be a CDGA satisfying Poincaré duality in dimension $n$. Let $ \{a_i\}_{1 \leq i \leq N}$ be homogenous basis of $A$ and let $ \{a^*_i\}_{1 \leq i \leq N}$
 be its Poincaré dual basis. Then the element
 \[ \mu= \sum_{i=1}^N (-1)^{deg(a_i)} a_i \otimes a^*_i \in (A \otimes A)^n\]
 does not depend on the choice of the basis $ \{a_i\}_{1 \leq i \leq N}$.
 \end{proposition}
 
 \begin{proposition} \label{ls2}Let $A$ be a CDGA satisfying Poincaré duality in dimension $n$, and let $\mu \in (A \otimes A)^n$ be its diagonal class. Then the map
 \[ A \rightarrow A \otimes A, a \mapsto \mu.(1 \otimes a) \]
 is a homomorphism of $A \otimes A$-modules.
\end{proposition}

\begin{examples} If $M$ is an oriented closed manifold of dimension $n$ then $H^*(M)$ satisfies Poincaré duality in dimension $n$. The diagonal class $d_M$ for $H^*(M)$ is defined by
 
 \[ d_M= \sum_{i=1}^N (-1)^{deg(a_i)} a_i \otimes a^*_i \in (H^*(M) \otimes H^*(M) )^n,\]
 where $\{a_i\}_{1 \leq i \leq N}$ is a homogeneous basis of $H^*(M)$ and $\{a_i^*\}_{1 \leq i \leq N}$ is a basis of $H^*(M)$ characterised by the equations \[\langle a_i \cup a_j^*, [M] \rangle = \delta_{ij},\]
 and [M] is the fundamental class of $M$.
\end{examples}
A manifold $M$ is called \textit{formal} if $C^*(M)$ is quasi isomorphic to cohomology ring $H^*(M)$ with vanishing differential. If $M$ is formal then an $A$ in the above corollary is given by $H^*(M)$ itself.
\newline
For the rest of this section, we shall assume that our CDGA $A$ is a Poincaré duality algebra quasi-isomoprhic to singular cochain algebra of $M$ over the rationals. In preparation for modeling the string coproduct and the Goresky-hingston product, we first consider the Gysin homomorphism $H^{i}(\Theta) \xrightarrow{\tau} H^{i+n}(\Lambda \times \Lambda)$. A chain model for $\tau$, also described in [FT], is given by
\begin{multline}
CH_*(A)\otimes_{A} CH_*(A)=A\otimes T(s \bar{A})\otimes T(s \bar{A}) \xrightarrow{ \tau_{!} } A \otimes A \otimes T(s \bar{A}) \otimes T(s \bar{A})=CH_*(A)\otimes CH_*(A) \\
a_0 [ a_1, \cdots, a_l \otimes b_1, b_2, \cdots, b_m] \rightarrow \sum_{(a_0)} a_0' [a_1, \cdots, a_l ] \otimes a_0'' [b_1, b_2, \cdots, b_m]
\label{model2}
\end{multline}
where $ \delta(a_0)= \sum_{(a)}(a_0)' \otimes (a_0)''.$ \newline
We recall the construction here. We know that a chain model for the cochain map $ev_0^*:H_*(LM) \rightarrow H_*(M)$ which is induced by the evaluation map $ev_0: \Lambda \rightarrow M$ is given by inclusion $A \hookrightarrow A\otimes T(s \bar{A})$. We then have the following pullback diagrams.\newline
$\begin{CD}
 H^{*}(\Theta) @> {\tau} >> H^{*}( \Lambda \times \Lambda)\\
 @A ev^*_0 AA @A (ev_0 \times ev_0)^* AA\\
 H^{*}(M) @> D_{!} >> H^{*}(M \times M)
 \end{CD}
 $
 $\begin{CD}
 A\otimes T(s \bar{A})\otimes T(s \bar{A}) @>{ \tau_{!}} >> A\otimes T(s \bar{A}) \otimes A\otimes T(s \bar{A})\\
 @A AA @A AA\\
 A @> \delta >>A \times A
 \end{CD}
 $ \newline \\
 Here the Gysin map $D_{!}$ for the diagonal embedding $D : M \rightarrow M \times M$ is the Poincaré dual of the map $H^*(D)$.
\newline
We also need the following lemma. 
\begin{lemma}
The chain map
\begin{multline}
 f^h: CH_*(A) \otimes CH_*(A)= A \otimes T(s\bar{A})\otimes A \otimes T(s\bar{A}) \rightarrow A \otimes_{A^{2}} T(s\bar{A})=CH_*(A) \otimes_A CH_*(A) \\
 a_0[ a_1, \cdots, a_l] \otimes b_0[ b_1, b_2, \cdots, b_m] \rightarrow a_0 b_0 [a_1, \cdots, a_l, b_1, b_2, \cdots, b_m]
 \label{model1}
 \end{multline}
 is a chain model for the homomorphism $H^*(\Lambda \times \Lambda) \rightarrow H^*(\Theta)$ induced by the inclusion $ \Theta \hookrightarrow \Lambda \times \Lambda$.
 \end{lemma}
\begin{proof}
 We have the following pushout diagrams.
 \newline
$\begin{CD}
 H^{*}(\Theta) @< << H^{*}( \Lambda \times \Lambda)\\
 @A ev^*_0 AA @A (ev_0\times ev_0)^* AA\\
 H^{*}(M) @< \cup << H^{*}(M \times M)
 \end{CD}
 $
 $\begin{CD}
 A\otimes T(s \bar{A})\otimes T(s \bar{A}) @< f^h << A\otimes T(s \bar{A}) \otimes A\otimes T(s \bar{A})\\
 @A AA @A AA\\
 A @< << A \times A
 \end{CD}
 $ \newline \\
 Since the pushout of a chain model of a fibration is a chain model of the pullback of the fibration, and a chain model for the cup product $\cup$ given by the multiplication in $ A$, hence the lemma follows. 
\end{proof}
Lastly we consider the map
 $H^{i+n}(\Lambda \times \Lambda) \xrightarrow{ (ev^{\Lambda}_0 \times id)^* } H^{i+n}(\Lambda) $.
 \begin{lemma}\label {cut1}
 A chain model for the map $(ev^{\Lambda}_0 \times id)^*$ is given by
\[ A\otimes T(s \bar{A}) \otimes A\otimes T(s \bar{A}) \xrightarrow{} A\otimes T(s \bar{A})\]
 \begin{multline}
 a_0[a_1, a_2 \cdots, a_l] \otimes b_0[ b_1, b_2 \cdots, b_m]= \left\{ \begin{array}{l l}
 a_0 b_0[ b_1, b_2 \cdots, b_m], & \quad for \quad l=0 \\
 0, & \quad otherwise \\
\end{array} \right.
 \label{model3} 
\end{multline}
\end{lemma}

\begin{proof}
Let us denote by $i$ the inclusion $M \rightarrow \Lambda$.
First we observe that, a chain model for the identity map which factors as
 $ H^*(M) \xrightarrow{ev_0^*} H^*(\Lambda)\xrightarrow{i^*}H^*(M)$ is given by the identity map $A \rightarrow A$ factoring as $ A \hookrightarrow A \otimes T(s \bar{A}) \xrightarrow{i^h} A $, where 
 \[i^h( a_0[a_1, a_2 \cdots, a_l])= \left\{ \begin{array}{l l}
 a_0 & \quad for \quad l=0 \\
 0 & \quad otherwise \\
\end{array} \right. \]
So $i^h$ is a chain model for $i^*.$ \newline
By definition, the map $(ev^{\Lambda}_0)^*: H^*(\Lambda) \rightarrow H^*(\Lambda)$ factors as $ H^*(\Lambda) \xrightarrow{i^*} H^*(M) \xrightarrow{(ev_0)^*} H^*(\Lambda)$.
Therefore a chain model for $(ev^{\Lambda}_0)^*$ is given by \[A \otimes T(s \bar{A}) \rightarrow A \otimes T(s \bar{A})\]
 \begin{equation}\label{al evxid1}
a_0[a_1, a_2 \cdots, a_l] \rightarrow \left\{ \begin{array}{l l}
 a_0 & \quad for \quad l=0 \\
 0 & \quad otherwise \\
\end{array} \right.
 \end{equation}
 
 So a chain model for $(ev^{\Lambda}_0 \times id)^*$ is given by a chain map which gives the above map when restricted to the first component of $(A\otimes T(s \bar{A}) \otimes (A\otimes T(s \bar{A})$ and gives the identity map when restricted to the second component, i.e.
 \begin{equation}\label{al evxid2}
 (ev^{\Lambda}_0 \times id)^* (a_0[a_1, a_2 \cdots, a_l] \otimes \mathbf{1} )=
 \left\{ \begin{array}{l l}
 a_0 & \quad for \quad l=0 \\
 0 & \quad otherwise \\
\end{array} \right.
 \end{equation}
 
 and
 \begin{equation}
 (ev^{\Lambda}_0 \times id)^* ( \mathbf{1} \otimes b_0[ b_1, b_2 \cdots, b_m] )=
 b_0[ b_1, b_2, \cdots, b_m] 
 \end{equation}
where $\mathbf{1}$ is the unity in $A$. \newline
We also have the following commutative diagrams: \newline
$\begin{CD}
H^{*}( \Lambda \times \Lambda) @> {(ev^{\Lambda}_0 \times id)^* } >> H^{*}( \Lambda)\\
 @AA (ev_0\times ev_0)^* A @AA ev_0^* A\\
H^{*}(M \times M) @> \cup >> H^{*}( M)
 \end{CD}
 $
 $\begin{CD}
 A\otimes T(s \bar{A}) \otimes A\otimes T(s \bar{A})@>{ } >> A\otimes T(s \bar{A})\otimes T(s \bar{A}) \\
 @AA A @AA A\\
 A \times A@> >> A
 \end{CD}
 $ \newline \\
So a chain model for the map $(ev^{\Lambda}_0 \times id)^* $ should be a trivial map or of the form
 \begin{equation}\label{al evxid3}
 a_0[a_1, a_2 \cdots, a_l] \otimes b_0[ b_1, b_2 \cdots, b_m] \rightarrow
 a_0 b_0 [.,., \cdots,.] 
 \end{equation}

The lemma now follows from \ref{al evxid1}, \ref{al evxid2} and \ref{al evxid3}.
 \end{proof}

Thus from \ref{model1}, \ref{model2} and \ref{model3}, we obtain a chain model for $P^*_0$,
\[ P^h_0: CH_*(A)\otimes CH_*(A) \rightarrow CH_*(A),\]
\begin{equation}\label{p0h} a_0[ a_1, \cdots, a_l] \otimes b_0[ b_1, \cdots, b_m] \rightarrow \left\{ \begin{array}{l l} \sum_{(a_0 b_0)}(a_0 b_0)' (a_0 b_0)'' [b_1, b_2, \cdots, b_m] & for \quad l=0 \\
 0 & \quad otherwise \\
\end{array} \right.
\end{equation}
where $ \delta (a_0 b_0)= \sum(a_0 b_0)' \otimes (a_0 b_0)''$.
We recall that dual of the operation $P_0$ induces the string coproduct $\vee_0$. Thus the dual of the above model gives a chain model for the string coproduct $\vee_0$.

Interchanging the position of the maps $ev_0$ and $id$, and using similar argument we also obtain a chain model for $P^*_1$:
\[ P^h_1: CH_*(A)\otimes CH_*(A) \rightarrow CH_*(A),\]
\begin{equation}\label{p1h}a_0 [a_1, \cdots, a_l] \otimes b_0[ b_1, \cdots, b_m] \rightarrow \left\{ \begin{array}{l l}
 \sum_{(a_0 b_0)} (a_0 b_0)' (a_0 b_0)'' [a_1, a_2, \cdots, a_l] & for \quad m=0 \\
 0 & \quad otherwise \\
\end{array} \right. \end{equation}

We recall that dual of the operation $P_1$ induces the string coproduct $\vee_1$ defined in \ref{degenerate product}. Thus the dual of the above model gives a chain model for the string coproduct $\vee_1$.
\newline
Now, we verify that
 the map \[P^h: CH_*(A) \otimes CH_*(A) \rightarrow CH_*(A) \] given by
 \begin{equation}\label{Ph} a_0 [a_1, \cdots, a_l] \otimes b_0[ b_1, \cdots, b_m] \rightarrow \sum_{(a_0 b_0)}(a_0 b_0)' [a_1, a_2, \cdots, a_l, (a_0 b_0)'', b_1, b_2, \cdots, b_m] \end{equation} defines a chain homotopy between
 $\varGamma^{\tau}_0 \circ i ^*$ and $\varGamma^{\tau}_1\circ i^*$
 
 \[ P^h_1- P^h_0= d P^h + P^hd.\]

We observe that both maps $P^h_0$ and $P^h_1$ vanish on relative Hochschild chain complex $\tilde{CH}_*(A)$. Consequently, the homotopy $P^h$ becomes a chain map, and passing to the homology it gives rise to a product on relative Hochschild homology $\tilde{HH}_*(A)$.
\newline
Now, recall that the Goresky-Hingston product also arises in a similar way from the homotopy $P$ between $P_0$ and $P_1$ defined in \ref{Ph}. So one can expect $P^ h$ restricted to the relative Hochschild chain $\tilde{CH}_*(A)$ realises the Goresky-Hingston product. In \cite{NW19}, the authors prove that that is indeed the case. 
\section{Some properties of the string topology operations}
We have the following easy corollary of the algebraic interpretations of the string operations in the previous section. 
\begin{corollary} If $f:M_1 \rightarrow M_2$ be a rational homotopy equivalence between two closed simply connected manifolds then the induced homotopy equivalence of the loop spaces $\Lambda f: \Lambda (M_1) \rightarrow \Lambda (M_2)$ induces a ring isomorphism
\begin{equation}
(\Lambda f)_*: (H^*( \Lambda (M_1), M_1), \circledast) \cong ( H^{*}(\Lambda (M_2), M_2), \circledast).
\end{equation}
\end{corollary}
\begin{proof} In rational homotopy theory there is a bijection: 
\begin{multline} \{ \mbox{rational homotopy type of spaces}\} \\\longrightarrow \{ \mbox{isomorphism classes of minimal Sullivan algebras}\}.\end{multline}
By the theorem \ref{ls} there exist a CDGA $A$ satisfying Poincaré duality in dimension $n$ quasi-isomorphic to both $C^*(M_1)$ and $C^*(M_2)$. Thus the corollary follows.
\end{proof}
 The string coproducts $\vee_t$ defined in the equation \ref{degenerate product}, which are same for all $ t \in (0,1)$, are almost trivial. Owing to this fact the string coproducts are also known as the degenerate string coproducts.

\begin{proposition}\label{proofdegprod}
For any $x \in H_i(\Lambda),$ for any $t \in [0,1]$, we have
\[\vee_t (x) = \left\{ \begin{array}{l l} 0 & \quad for \quad i \neq n \\
 \chi(M).(x \odot \mathbf{1}) \otimes \mathbf{1} & \quad for \quad i= n \\
\end{array} \right. \] 

with $x \odot \mathbf{1} \in H_0(M)$ when $x \in H_n(M)$, where $ \chi(M)$ is the Euler characteristic of the manifold $M$, $\mathbf{1}$ is the generator of $H_0(M)$.
\end{proposition}
\begin{proof}

 Since the chain maps $P^h_0$ and $P^h_1$ in \ref{p0h} and \ref{p1h}respectively are homotopic, therefore they induce the same map on Hochschild homology. Combining their chain model we deduce that the induced map on homology $P^*_0= P^*_1$ is trivial on homology classes represented by chains with string length greater that one in one of the component, i.e., either $l\geq 1$ or $m \geq 1$. For chains with $l=0$ and $m=0$, $P^*_0= P^*_1$ is given by
 
 \[a_0 \otimes b_0 \rightarrow \sum_{(a_0 b_0)} (a_0 b_0)' (a_0 b_0)''.\]
 
Now a Hochschild chain with string length at most one can not represent a homology class of dimension more than $n$. It follows that, for $x \in H^i(\Lambda)$ and $y \in H^i(\Lambda)$, if the class $P^*_0(x, y) \in H^{i+j+n}(\Lambda)$ is non trivial then $i+j+n \leq n$, this implies $i=j=0$.
 
We deduce from the homology and cohomology pairing that the dual of $P^*_0$, $\vee_0$ is trivial on $ H_i(\Lambda)$ for $i \neq n$ and for $x \in H_i(\Lambda)$ with $i = n$
 \[ \vee_0(x)= \beta \ \mathbf{1} \otimes \mathbf{1} \in H_0(\Lambda) \times H_0(\Lambda)\]
 for some $\beta \in \mathbb{Q}$.
 For $a_0= b_0= \mathbf{1}$, the identify element of $A$, we have
 \[P_0( \mathbf{1} \otimes \mathbf{1} ) = \sum (-1)^{\mbox{deg} i }(a_i a^*_i), \] where $\{a_i\}$ is a homogeneous basis of $A$ and $\{a^*_i\}$ its dual basis. This implies
 \[P_0( \mathbf{1} \otimes \mathbf{1} ) = \chi(M) [M],\]
 where $[M]$ is an element representing the Poincaré dual of the fundamental class $[M]$. Since $P_0$ is dual of $\vee_0$, therefore $\vee_0([M])= \chi(M) \mathbf{1} \otimes \mathbf{1}$. It is known that $[M] \odot \mathbf{1}= \mathbf{1}$, thus
 \[\vee_0(x)= \chi(M) (x \odot \mathbf{1}) \otimes \mathbf{1}.\]
 \end{proof}
 
A closed geodesic is called prime if it is not a iterate of another closed geodesic. Let $M$ be a closed manifold all of whose geodesics are closed with same prime length. Example of such manifolds includes $n$-spheres for $n \geq 2$, complex and quaternionic projective spaces $ \mathbb{C}P^n, \mathbb{H}P^n$ for $n \geq 1$. Let $ \Sigma \subset \Lambda$ denote the submanifold consists of all the prime closed geodesics, it is diffeomorphic to the unit sphere bundle $SM$ of $M$. Let $\lambda_1$ denote the common index of prime closed geodesics.

 In \cite{GH09}, the authors showed that there is an isomorphism
\[h: H^{*}(\Sigma) \rightarrow H^{* + \lambda_1}(\Lambda, \Lambda_0),\]
and proved the following proposion. 
\begin{proposition}\label{ghi}
Let $M$ be manifold all of whose geodesics are closed with same prime length. Then the ring $(H^*(\Lambda,\Lambda_0), \circledast)$ is finitely generated and it is generated by the images of the generators of $H^*(\Sigma)$ by the map $h$.
\end{proposition}

 A manifold $M$ is called monogenic if its singular cohomology ring $H^*(M)$ with the cup product is generated by a single element. In this case the ring $H^*(M)$ is isomorphic to the truncated polynomial ring $\mathbb{Q}[x]/{x^{r+1}=0}$ for some $r$, where $x$ is the generator. The examples of such manifold include spheres, complex projective spaces.

\begin{proposition}
Let $M$ be a simply-connected monogenic manifold, then the ring $(H^*(\Lambda(M), $ $\Lambda_0), \circledast)$ is finitely generated. The number of generator is same as the dimension of the cohomology group of the sphere bundle of $M$.
\end{proposition}
\begin{proof}
Let $H^*(M)=\mathbb{Q}[x]/{x^{r+1}=0} $, for some $r$ and for some $x$ with $\mbox{dim}(x)=m$.
We know that the cohomology ring of the complex projective space $\mathbb{C} P^r$ is $\mathbb{Q}[x]/{x^{r+1}=0}$, where $x$ is the generator of degree $2$. Since $M$ is simply connected monogenetic so it is formal. Consequently, using the algebraic model for the G-H product \ref{Ph}, we deduce that the ring $(H^*(\Lambda(M), \Lambda_0), \circledast)$ is completely determined by $H^*(M)$, and $(H^*(\Lambda(M), \Lambda_0), \circledast)$ is isomorphic to $(H^*(\Lambda(\mathbb{C} P^r), \Lambda_0), \circledast)$. Now we know from \cite{GH09} that the ring $(H^*(\Lambda(\mathbb{C} P^r), \Lambda_0), \circledast)$ is finitely generated for any $r \in \mathbb{N}$. The statement on generators follows from Proposition \ref{ghi}. 
\end{proof}

\subsection{The G-H product for the spheres}The ring $(H^*(\Lambda(S^n), S^n), \circledast)$ was computed in \cite{GH09} using purely geometric method. Here we compute the same ring using the algebraic model for the product. 
\newline
\underline{\textbf{$(H^*(\Lambda(S^n), S^n), \circledast)$ for $n$ odd}} \newline
For $n$ odd, it is known that
\[ H^m(SS^n)= \left\{ \begin{array}{l l}
 \mathbb{Q} & \quad \mbox{for} \quad m=0, n-1, n, 2n-1 \\
 0 & \quad \mbox{otherwise} \\
\end{array} \right. \]
and
\[ H^k(\Lambda(S^n))= \left\{ \begin{array}{l l}
 \mathbb{Q} & \quad \mbox{for} \quad k=m(n-1), m(n-1)+n, m\geq 0 \\
 0 & \quad \mbox{otherwise} \\
\end{array} \right. \]

Since $\Lambda= n-1$ for $S^n$, therefore the image of generators of the cohomology group $H^*(S(S^n))$ by $h$ are cohomology classes in dimensions $n-1, 2n-2, 2n-1, 3n-2$. Since the dimension of the cohomology groups $H^k(\Lambda(S^n), S^n)$ in each dimension is at most one, therefore by proposition \ref{ghi} they represent the generators of the ring $(H^*(\Lambda(S^n), S^n), \circledast)$.

 Now, let $\mathbf{1}$ be the generator of $H^0(S^n)$ and $x_1$ be the generator of $H^n(S^n)$. Then it is easy to check that the elements $\omega= \mathbf{1}[x_1]$, $X= \mathbf{1}[x_1, x_1]$, $ Y= x_1[x_1]$, $Z= x_1[x_1, x_1]$ define non-trivial homology classes in the dimensions $n-1, 2n-2, 2n-1, 3n-2$ respectively. Thus these four elements are the generators of the ring $ (\tilde{HH}_*(H^*(M)), \circledast)$. Since the coproduct of $\mathbf{1}$, $ \delta({\mathbf{1}})= (\mathbf{1} \otimes \mathbf{1}) \cup d_{S^n}= \mathbf{1} \otimes x_1 $, where $d_{S^n}= \mathbf{1} \otimes x_1$
 be the diagonal class of $S^n$, we check that $ \omega, X, Y, Z$ satisfy the following relations $$ X^2= \omega^{3}, \quad X \circledast Z= \omega^{3}\circledast Y, \quad X \circledast Y= Y \circledast X = Z \circledast \omega= \omega \circledast Z$$
 $$Z \circledast Z= Y \circledast Y = Y \circledast Z=0,$$
 Setting $ \omega= T^2, X=T^3, Y=U \otimes T^2$ and $Z=U \otimes T^3$, we have \[H^*(\Lambda(S^n), S^n)\cong \wedge (U) \otimes \mathbb{Q}[T]_{\geq 2},\] where $\mathbb{Q}[T]_{\geq 2}$ is denotes the ideal $(T^2)$ in the ring of polynomials. Here we note that the computation produces the same ring structure as in \cite{GH09}.
 \newline
\underline{\textbf{$(H^*(\Lambda(S^n), S^n), \circledast)$ for $n$ is even}}
\newline
For $n$ even, it is known that
\[ H^m(SS^n)= \left\{ \begin{array}{l l}
 \mathbb{Q} & \quad for \quad m=0, 2n-1 \\
 0 & \quad otherwise \\
\end{array} \right. \]
and
\[ H^k(\Lambda(S^n))= \left\{ \begin{array}{l l}
 \mathbb{Q} & \quad for \quad k=0, (2m+1)(n-1)+1, (2m+1)(n-1), m\geq 0 \\
 0 & \quad otherwise \\
\end{array} \right. \]

Similar to above case for odd $n$ we can show that the elements $\omega= \mathbf{1}[x_1]$, $Z= x_1[x_1, x_1]$ are the generators of the ring $ (HH_*(H^*(M), H^*(M)), \circledast)$ and they satisfy
$$ Z \circledast Z= 0$$
Therefore, $H^*(\Lambda(S^n), S^n)\cong \wedge (Z) \otimes \mathbb{Q}[w]$. \newline
\subsection{The G-H product for the complex projective spaces} The following facts are known. 
\[ H^m(S \mathbb{C} P^n)= \left\{ \begin{array}{l l}
 \mathbb{Q} & \quad for \quad m=2i, i=0,1, \cdots, n-1, \\
 \mathbb{Q} & \quad for \quad m=2i+2n-1, \quad i=1, \cdots, n, \\
 0 & \quad otherwise \\
\end{array} \right. \]
and 
\[ H^k(\Lambda(\mathbb{C} P^n))= \mathbb{Q}\quad \mbox{for all} \quad k.\]

Since $\Lambda=1$ for $\mathbb{C} P^n$, Since $\Lambda= n-1$ for $S^n$, therefore the image of the generators of cohomology group $H^*(S(S^n))$ by $h$ are cohomology classes in dimensions $2i-1, 2i+2n$ for $i= 0, 1,\cdots, n$. Since the dimension of the cohomology groups $H^*(\Lambda(\mathbb{C} P^n), \mathbb{C} P^n)$ in any dimension is at most one, therefore by proposition \ref{ghi} they represent the generators of the ring $(H^*(\Lambda(\mathbb{C} P^n), \mathbb{C} P^n), \circledast)$.
\newline
Let $\beta$ be a generator of the ring $H^*(\mathbb{C} P^n)$, then the elements $ \mathbf{1}[\beta^i]$, for $i\neq 4 m$, are some generators of the ring $(H^*(\Lambda(\mathbb{C} P^n), \mathbb{C} P^n), \circledast)$, but the remaining generators are not so easy to identify.
 
\section{Optimal index growths of closed geodesics}\label{applications}
In this section we discuss the relations between string topology operations and index growth of closed geodesics and its implications on existence of closed geodesics. 
We begin by giving a brief account of the known results on the problem. 
\newline
The existence of at least one closed geodesic on any compact manifold was shown by Lyusternik and Fet \cite{LF51}, extending the works of Birkhoff \cite{Br66}. If the fundamental group is non-trivial 
then one merely needs to look at the closed curve representations with minimal length. For simply 
connected space one uses Birkhoff's minimax argument.

The existence of infinitely many geometrically distinct closed geodesics on surfaces is shown using a 
similar minimax argument as of Birkhoff's. In \cite{HR89}, Rademacher showed that a simply connected compact 
manifold with a generic Riemannian metric admits infinitely many geometrically distinct closed 
geodesics.

The main obstacle in finding geometrically distinct closed geodesics is that the iterates of closed 
geodesics may also contribute to the homology of the free loop space. The contributions are often 
controlled by their index and nullity.

The following theorem due to R. Bott \cite{BT56} gives us estimates of indices of iterates.

\begin{theorem}
Let $ \gamma$ be a closed geodesic of a $n$-dimensional compact manifold $M$ and let $
\lambda_m$ and $
\nu_m$ are the index and the nullity of the $m$-fold iterate $\gamma^m$ respectively. Then $ \nu_m 
\leq 
2(n-1)$ for all $m$ and 
\begin{equation}\label{indexgrowth}
\begin{split} |\lambda_m -m \lambda_1 | \leq (m-1)(n-1), \\ |\lambda_m + \nu_m -m (\lambda_1 + 
\nu_1)| \leq 
(m-1)(n-1),
\end{split}
\end{equation}

Furthermore, if $\gamma$ and $\gamma^2$ are nondegenerate, then for any $k \geq 0$, there 
exists a constant $C_k$ such that for all $m \geq C_k$, 

\begin{equation}\label{indexgrowth2}
\begin{split} |\lambda_m -m \lambda| \leq (m-1)(n-1)-k, \\ |\lambda_m + \nu_m -m (\lambda_1 + \nu_1)| \leq 
(m-1)(n-1)-k.
\end{split}
\end{equation}

\end{theorem}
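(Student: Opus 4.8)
The plan is to reconstruct Bott's iteration argument; everything reduces to a decomposition of the index form of $\gamma^m$ over the $m$-th roots of unity. First I would set up the \emph{twisted index forms}. Regard $\gamma$ as a closed geodesic of period $1$, and for $\omega\in S^1\subset\mathbb C$ let $\mathcal V_\omega$ be the Hilbert space of $W^{1,2}$-sections $V$ of the complexified normal bundle $(\dot\gamma^{\perp})^{\mathbb C}$ along $\gamma$ with the quasi-periodicity $V(1)=\omega\,V(0)$; on $\mathcal V_\omega$ put the Hermitian index form
\[ I_\omega(V,W)=\int_0^1\big(\langle\nabla_t V,\nabla_t W\rangle-\langle R(V,\dot\gamma)\dot\gamma,W\rangle\big)\,dt . \]
Define $\Lambda_\gamma(\omega)$ to be the index of $I_\omega$ and $N_\gamma(\omega)$ its nullity. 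One has $\Lambda_\gamma(1)=\lambda_1$ and $N_\gamma(1)=\nu_1$ (the tangential fields $f\dot\gamma$ contribute $\int(f')^2\ge0$ and so do not affect the index, and the $\dot\gamma$-nullity is precisely the $-1$ in the paper's convention), and $I_\omega$ is conjugate to $I_{\bar\omega}$, so $\Lambda_\gamma(\omega)=\Lambda_\gamma(\bar\omega)$, $N_\gamma(\omega)=N_\gamma(\bar\omega)$. The key identity to establish is the \emph{iteration formula}
\[ \lambda_m=\sum_{\omega^m=1}\Lambda_\gamma(\omega),\qquad \nu_m=\sum_{\omega^m=1}N_\gamma(\omega): \]
one lets $\mathbb Z/m$ act on the $1$-periodic sections along $\gamma^m$ by the deck translation $t\mapsto t+1$, decomposes this space into its $\omega$-eigenspaces over the characters $\omega$ of $\mathbb Z/m$, notes that the index form of $\gamma^m$ is invariant and hence block-diagonal, and checks that the $\omega$-block is unitarily isometric to $(\mathcal V_\omega,I_\omega)$ after rescaling to period $1$; summing indices and nullities over blocks gives the formula. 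The bound $\nu_m\le2(n-1)$ I would prove directly: a periodic Jacobi field along $\gamma^m$ is determined by its value and covariant derivative at one point, hence the periodic Jacobi fields span a space of dimension $\le2n$, the tangential ones are spanned by $\dot\gamma$ alone (a periodic $a\dot\gamma+bt\dot\gamma$ forces $b=0$), which cuts the bound to $2n-1$, and subtracting the $\dot\gamma$ direction gives $\nu_m\le2(n-1)$.

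The heart of the proof is the \emph{oscillation estimate}: for every $\omega\in S^1$,
\[ |\Lambda_\gamma(\omega)-\lambda_1|\le n-1 \qquad\text{and}\qquad |\Lambda_\gamma(\omega)+N_\gamma(\omega)-(\lambda_1+\nu_1)|\le n-1 . \]
The eigenvalues of $I_\omega$ vary continuously in $\omega$, $N_\gamma$ is supported on the finite set of $\omega$ admitting an $\omega$-twisted Jacobi field, and as $\omega$ crosses one such point the number of eigenvalues changing sign is at most $N_\gamma(\omega)$. These local facts alone bound $|\Lambda_\gamma(\omega)-\lambda_1|$ only by the number of Jacobi instants times $n-1$, so to get the clean bound $n-1$ I would pass to the symplectic description: the linearised geodesic flow on the normal symplectic bundle is a path $\Psi(t)$, $t\in[0,1]$, in $\mathrm{Sp}(2(n-1),\mathbb R)$ with $\Psi(0)=\mathrm{id}$, and $\Lambda_\gamma(\omega)$ is the $\omega$-index of this path, an $\omega$-rotation (Maslov-type) count whose deviation from its value at $\omega=1$ is, over the whole circle of $\omega$'s, controlled by half the dimension, i.e.\ by $n-1$; this is the standard convexity/monotonicity property of such indices, equivalently Bott's analysis of how the negative part of $I_\omega$ ``rotates''. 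Granting the oscillation estimate, the iteration formula gives
\[ \lambda_m-m\lambda_1=\sum_{\omega^m=1,\ \omega\neq1}\big(\Lambda_\gamma(\omega)-\lambda_1\big), \]
a sum of $m-1$ terms each of absolute value $\le n-1$, which is the first line of \eqref{indexgrowth}; repeating with $\Lambda_\gamma+N_\gamma$ in place of $\Lambda_\gamma$ gives the second line.

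For the refinement \eqref{indexgrowth2}, nondegeneracy of $\gamma$ and $\gamma^2$ means $N_\gamma(1)=N_\gamma(-1)=0$, so $\Lambda_\gamma$ is continuous at the points $\pm1$ and, crucially, its extreme values $\lambda_1\pm(n-1)$ are attained only on the finite set $S=\{\omega:N_\gamma(\omega)>0\}$, with the amount by which $\Lambda_\gamma(\omega)-\lambda_1$ can reach $n-1$ tied to how many points of $S$ separate $\omega$ from $1$. For $m$ large the nontrivial $m$-th roots of unity equidistribute on $S^1$, so only a bounded number of them can lie within any fixed neighbourhood of $S$; a pigeonhole count then shows that, given $k$, there is a constant $C_k$ such that for $m\ge C_k$ the sum $\sum_{\omega^m=1,\ \omega\neq1}(\Lambda_\gamma(\omega)-\lambda_1)$ is bounded by $(m-1)(n-1)-k$, which is \eqref{indexgrowth2}; the same argument with $\Lambda_\gamma+N_\gamma$ handles the second line.

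The main obstacle is the oscillation estimate of the second paragraph: obtaining the \emph{uniform} bound $n-1$, rather than a bound growing with the number of conjugate instants, on the deviation of $\Lambda_\gamma$ and $\Lambda_\gamma+N_\gamma$ from their values at $\omega=1$. This is exactly where the symplectic geometry of $\mathrm{Sp}(2(n-1),\mathbb R)$ enters, and it is the content of Bott's original iteration theorem; the example of iterated great circles on $S^n$, where $\Lambda_\gamma(\omega)-\lambda_1=n-1$ for every $\omega\neq1$, shows the bound cannot be improved and is precisely the extremal index growth governing Theorems \ref{second} and \ref{third}. The refinement \eqref{indexgrowth2} is then a careful quantitative sharpening of this estimate together with an elementary count of roots of unity near a finite set.
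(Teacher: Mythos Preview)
The paper does not give its own proof of this theorem: it is quoted in \S\ref{preli} as a known result of Bott with a bare citation to \cite{BT56}, and is then used as a black box in the proofs of Theorems~\ref{second} and~\ref{third}. So there is nothing in the paper to compare your argument against.

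That said, your outline is a faithful reconstruction of Bott's original iteration argument. The twisted index forms $I_\omega$, the block decomposition of the index form of $\gamma^m$ over the $m$-th roots of unity, and the resulting iteration formula $\lambda_m=\sum_{\omega^m=1}\Lambda_\gamma(\omega)$ are exactly Bott's set-up. You are right that the entire content lies in the uniform oscillation bound $|\Lambda_\gamma(\omega)-\Lambda_\gamma(1)|\le n-1$ (and its companion for $\Lambda_\gamma+N_\gamma$), and that the naive ``sum of local jumps'' argument only gives a bound proportional to the number of Poincar\'e exponents on $S^1$; getting down to $n-1$ genuinely requires the symplectic normal-form analysis of the linearised Poincar\'e map, which is what Bott carried out. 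Your great-circle example is the correct witness for sharpness and is precisely the extremal case appearing in Theorems~\ref{second} and~\ref{third}. The sketch for \eqref{indexgrowth2} is also in the right spirit: nondegeneracy at $\pm1$ forces the extreme values of $\Lambda_\gamma$ to be attained only away from $\pm1$, and an equidistribution/pigeonhole count of $m$-th roots near the finite support of $N_\gamma$ then yields the $-k$ improvement for large $m$. In short, your proposal is a correct high-level plan for a proof the paper deliberately omits; what remains is to actually execute the symplectic step, which is the substance of \cite{BT56}.
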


The contributions of a geodesic to local level homologies are better 
understood than homology of the whole loop space. We recall below some of the well known results 
on this.

 Let $ \gamma$ be a closed geodesic and $\Gamma^- \ra \bar{\gamma}$ be the negative bundle whose fibers are the negative eigenspaces of Hessians of $F$.
 Then the change in topology of free loop space while crossing a critical orbit can be
 expressed homologically as follows:

\begin{theorem}[\cite{BT54, RP63}] Let $\gamma$ be nondegenerate closed geodesic on a compact manifold $M$ of 
index $
\lambda$. Let $G= \z$ when the negative bundle $\Gamma^- \ra \bar{\gamma}$ is oriented, and $G= 
\z_2$ otherwise. Then the local level homology groups are given by
\[ H_i(\La^{<a} \cup \bar{\gamma}, \Lambda^{<a})\cong \left\{ \begin{array}{l l}
 G & \quad for \quad i=\lambda, \lambda + 1 \\
 0 & \quad otherwise. \\
\end{array} \right. \]
An analogous statement holds for cohomology. 
\end{theorem}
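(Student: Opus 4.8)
The plan is to reduce the computation to the Thom isomorphism for the negative bundle $\Gamma^-\to\bar\gamma$ via Morse--Bott theory on the Hilbert manifold $\Lambda$. First I would record the local structure of $F$ near $\bar\gamma$. Since $\gamma$ is non-constant, $a:=F(\gamma)>0$ and $F=\sqrt E$ is smooth on a neighborhood of $\bar\gamma$ in $\Lambda$; at a critical point $dE$ vanishes, so $d^2F(\gamma)=\tfrac{1}{2\sqrt{E(\gamma)}}\,d^2E(\gamma)$ and hence $\gamma$ has the same index $\lambda$ and nullity for $F$ as for $E$. Nondegeneracy of $\gamma$ then means $\ker d^2F(\gamma)=T_\gamma\bar\gamma$, so $\bar\gamma$ is a nondegenerate critical $S^1$-orbit. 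Applying the generalized Morse lemma with parameters along $\bar\gamma$ (the Hilbert-manifold version, cf.\ \cite{Kl78}), there is a tubular neighborhood $U$ of $\bar\gamma$, a splitting of the normal bundle $\nu(\bar\gamma)=\Gamma^-\oplus\Gamma^+$ into negative and positive definite parts, and fiber coordinates $(\xi^-,\xi^+)$ in which $F|_U = a-\|\xi^-\|^2+\|\xi^+\|^2$.

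Second, I would invoke the deformation lemma. Because $E$ satisfies the Palais--Smale condition on $\Lambda$ and $\bar\gamma$ is the only critical orbit at level $a>0$, the negative pseudo-gradient flow of $F$ is well behaved near level $a$. Using it together with the normal form, I would show that, up to excision to $U$, the pair $(\Lambda^{<a}\cup\bar\gamma,\Lambda^{<a})$ deformation retracts onto $(D(\Gamma^-),S(\Gamma^-))$, the disk bundle of $\Gamma^-$ relative to its sphere bundle: collapsing $\xi^+\to 0$ retracts $\Lambda^{<a}\cap U$ onto $D(\Gamma^-)\setminus\bar\gamma$ and $(\Lambda^{<a}\cup\bar\gamma)\cap U$ onto $D(\Gamma^-)$, and $D(\Gamma^-)\setminus\bar\gamma$ retracts radially onto $S(\Gamma^-)$. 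Hence
\[
H_i(\Lambda^{<a}\cup\bar\gamma,\Lambda^{<a})\;\cong\;H_i\big(D(\Gamma^-),S(\Gamma^-)\big),
\]
and the right-hand side is the reduced homology of the Thom space of $\Gamma^-\to\bar\gamma$.

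Third, apply the Thom isomorphism. If $\Gamma^-$ is orientable we take $G=\z$ and obtain $H_i(D(\Gamma^-),S(\Gamma^-);\z)\cong H_{i-\lambda}(\bar\gamma;\z)$; otherwise every vector bundle is $\z_2$-orientable, so with $G=\z_2$ the same isomorphism holds. Since $\bar\gamma\cong S^1$ has $H_0(\bar\gamma;G)\cong H_1(\bar\gamma;G)\cong G$ and all other homology groups zero, the group $H_i(\Lambda^{<a}\cup\bar\gamma,\Lambda^{<a})$ is $G$ for $i=\lambda,\lambda+1$ and $0$ otherwise, as claimed. The cohomology statement is proved identically, using the Thom isomorphism in cohomology and $H^0(S^1;G)\cong H^1(S^1;G)\cong G$.

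I expect the main obstacle to be the first step: carefully establishing the Morse--Bott normal form on the infinite-dimensional Hilbert manifold (the generalized Morse lemma applied fiberwise over $\bar\gamma$, using that the Hessian of $F$ splits into a finite-rank negative part and an invertible-modulo-compact positive part), and checking that replacing $E$ by $F=\sqrt E$ preserves the Palais--Smale condition and the local picture away from the constant loops. Once the local model and the deformation lemma are in place, the remainder is a formal application of the Thom isomorphism together with the homology of $S^1$.
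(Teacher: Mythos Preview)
The paper does not supply its own proof of this statement; it is quoted as a known result from the literature (the line ``We then have the following from \cite{RP63}'' immediately precedes the theorem). Your outline is the standard Morse--Bott argument and is essentially correct: reduce the relative pair to $(D(\Gamma^-),S(\Gamma^-))$ via the normal form and excision, then apply the Thom isomorphism over $\bar\gamma\cong S^1$.

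One small point to tighten: you assert that ``$\bar\gamma$ is the only critical orbit at level $a$,'' which is not part of the hypotheses. You do not actually need this. Since the pair in question is $(\Lambda^{<a}\cup\bar\gamma,\Lambda^{<a})$ rather than $(\Lambda^{\le a},\Lambda^{<a})$, the computation is purely local near $\bar\gamma$: excise to a tubular neighborhood $U$ of $\bar\gamma$ disjoint from any other critical points at level $a$, and the deformation and Thom-isomorphism steps go through inside $U$ without any global assumption on the critical set at level $a$. With that adjustment your argument is complete.
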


 For degenerate case, we do not unfortunately have a definitive answer as the above. However the following result due to Gromoll and Meyer gives a very useful estimate.

\begin{theorem}[\cite{GM69}] \label{GrM}Let $\gamma$ be an isolated (possibly degenerate) closed geodesic with 
index $\lambda$ and nullity $\nu$. If a local level homology group $H_i(\La^{<a} \cup \bar{\gamma}, 
\La^{<a}) \neq 0$ then $i \in [\lambda, \lambda + \nu +1]$. The same holds for cohomology groups. 
\end{theorem}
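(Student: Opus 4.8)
The plan is to reduce the statement to finite-dimensional local Morse theory at the isolated critical orbit $\bar\ga$, following Gromoll and Meyer. Since $\ga$ is non-constant, $F=\sqrt E$ is smooth on a neighbourhood of $\bar\ga$ and the circle action there is locally free, so a tubular neighbourhood of $\bar\ga$ in $\La$ is equivariantly modelled on a bundle over $\bar\ga\cong S^1$ whose fibre is a slice $N$ through $\ga$, on which $F|_N$ has an \emph{isolated} critical point at $\ga$ (because the orbit $\bar\ga$ is isolated among critical orbits at level $a$). The Hessian $d^2F(\ga)$ restricted to $T_\ga N$ is the index form, a compact perturbation of the identity and hence Fredholm, so $T_\ga N$ splits orthogonally as $T^-\oplus T^0\oplus T^+$ with $\dim T^-=\la$ and $\dim T^0=\nu$ (the single null direction of $d^2F(\ga)$ on $\La$, tangent to the orbit, having been removed by passing to the slice).

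Next I would invoke the Gromoll--Meyer generalised Morse lemma on $N$: there are local coordinates $(x_-,x_0,x_+)$ about $\ga$ in which $F=F(\ga)-|x_-|^2+|x_+|^2+h(x_0)$, with $h$ a smooth function near $0$ in the $\nu$-dimensional space $T^0$ having an isolated critical point at $0$ and $h(0)=0$, $dh(0)=0$, $d^2h(0)=0$. The shifting theorem (a Künneth argument for a product Gromoll--Meyer pair adapted to this splitting) then identifies the critical group,
\[ C_i(F|_N,\ga)\;\cong\;C_{i-\la}(h,0). \]
Since $0$ is an isolated critical point of $h$ on the $\nu$-dimensional manifold $T^0$, one may choose a Gromoll--Meyer pair $(W,W^-)$ for $h$ at $0$ with $W$ a compact $\nu$-dimensional manifold with corners and $W^-\subset\partial W$; thus $C_j(h,0)\cong H_j(W,W^-)$ vanishes for $j\notin[0,\nu]$, and therefore $C_i(F|_N,\ga)=0$ unless $i\in[\la,\la+\nu]$.

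Finally I would pass from the slice back to the orbit. After excising the complement of the tubular neighbourhood, $H_*(\La^{<a}\cup\bar\ga,\La^{<a})$ is the homology of a pair that fibres over $\bar\ga\cong S^1$ with fibre pair of homology $C_*(F|_N,\ga)$; the corresponding Wang exact sequence
\[ \cdots\to C_j(F|_N,\ga)\xrightarrow{\,1-\sigma_*\,}C_j(F|_N,\ga)\to H_j(\La^{<a}\cup\bar\ga,\La^{<a})\to C_{j-1}(F|_N,\ga)\to\cdots \]
(with $\sigma$ the monodromy of the bundle) exhibits $H_j$ of the local pair as an extension of a subgroup of $C_{j-1}(F|_N,\ga)$ by a quotient of $C_j(F|_N,\ga)$. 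Both of these vanish for $j\notin[\la,\la+\nu+1]$, giving the claim; the cohomology statement follows by running the same argument in cohomology or by universal coefficients over the ground ring ($\z$ or $\z_2$).

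The main obstacle is the infinite-dimensional local analysis underlying the Gromoll--Meyer lemma: one must check that $F$ satisfies the Palais--Smale condition on $\La$, that $d^2F(\ga)$ is Fredholm with finite-dimensional kernel and negative subspace, and --- the genuinely delicate point --- that the Morse splitting and a Gromoll--Meyer pair can be constructed compatibly with the $S^1$-slice decomposition of a tubular neighbourhood of $\bar\ga$. The smoothness of $F=\sqrt E$ away from the constant loops is exactly what makes this work (and is why one uses $F$ rather than $E$). By comparison, the finite-dimensional endgame --- that $C_*(h,0)$ for $h$ on $\r^\nu$ is concentrated in degrees $[0,\nu]$ --- is routine once it is realised as the relative homology of a compact $\nu$-manifold with boundary.
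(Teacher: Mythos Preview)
The paper does not prove this statement at all: Theorem~\ref{GrM} is quoted as a known result of Gromoll and Meyer \cite{GM69} in the preliminaries section, with no argument given. Your sketch is a faithful outline of the original Gromoll--Meyer proof --- slice reduction, the generalised Morse lemma with splitting $F = -|x_-|^2 + |x_+|^2 + h(x_0)$, the shifting theorem, the bound on $C_*(h,0)$ from $\dim T^0=\nu$, and the Wang sequence over $S^1$ to recover the extra $+1$ --- and the logic is sound. There is nothing to compare against in the paper itself.
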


A consequence of the above theorem is the following celebrated theorem of Gromoll and Meyer 
\cite{GM69}.
 
\begin{theorem} Let $M$ be a compact simply connected Riemannian manifold such that the 
sequence of Betti numbers $\beta_i(\Lambda(M))$ is unbounded (with some field of coefficients). 
Then $M$ admits infinitely many geometrically distinct closed geodesics.
\end{theorem}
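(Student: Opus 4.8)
The plan is to establish the contrapositive: if $M$ carries only finitely many geometrically distinct closed geodesics, then the sequence $\beta_q:=\beta_q(\La(M);\mathbb F)$ is bounded. Label the geodesics $c_1,\dots,c_k$. Every non-constant closed geodesic has the same image in $M$ as some $c_j$, hence is a reparametrisation of an iterate $c_j^{\pm m}$; so the critical orbits of the energy $E$ on $\La$ are the constant-loop manifold $\cong M$ together with the $S^1$-orbits $\bar c_j^{\,\pm m}$ for $1\le j\le k$, $m\ge 1$. Since $M$ is simply connected, the Serre spectral sequence of $\Omega M\to\La M\to M$ shows $H_*(\La M;\mathbb F)$ is finite-dimensional in each degree, so the $\beta_q$ are well defined and it is enough to bound them for $q$ large.

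The first step is Morse theory for $F=\sqrt E$ on $\La$. The functional satisfies Palais--Smale \cite{Kl78}; near the constant loops (where $F$ is not smooth) one works with $E$ and uses that $\La^{\le a_0}$ deformation retracts onto $M$ when $a_0$ is below the length of the shortest closed geodesic; the critical orbits below any level are finite in number and the critical values tend to $\infty$. Filtering $\La$ by the sublevel sets $\La^{\le a}$ and applying the usual deformation lemmas at regular and critical values (with excision at a critical level carrying several orbits) yields the Morse-type inequality
\[
\beta_q\ \le\ \beta_q(M;\mathbb F)\ +\ \sum_{j,\pm,m}\dim H_q\bigl(\La^{<a}\cup\bar c_j^{\,\pm m},\,\La^{<a};\mathbb F\bigr),\qquad a=F(c_j^{\,\pm m})=mL(c_j),
\]
whose summands are precisely the local level homology groups of Theorem~\ref{GrM}.

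The heart of the argument is to bound this sum uniformly in $q$, which I would do one geodesic $c=c_j$ at a time. Write $\lambda_m,\nu_m$ for the index and nullity of $c^m$. By Bott's theorem \cite{BT56} the average index $\hat\lambda:=\lim_m\lambda_m/m$ exists with $|\lambda_m-m\hat\lambda|\le n-1$, and $\nu_m\le 2(n-1)$. I need two inputs. First, Theorem~\ref{GrM} gives $H_q(\La^{<a}\cup\bar c^{\,m},\La^{<a};\mathbb F)=0$ unless $\lambda_m\le q\le\lambda_m+\nu_m+1$, i.e.\ unless $\lambda_m\in[q-2n+1,q]$. Second, the total rank $\sum_q\dim H_q(\La^{<a}\cup\bar c^{\,m},\La^{<a};\mathbb F)$ is bounded by a constant $K(c)$ independent of $m$; this is the quantitative output of the Gromoll--Meyer local analysis \cite{GM69}, which shows that the local model of $\bar c^{\,m}$ is a $\z/m$-invariant function germ on a manifold of dimension $\nu_m+1\le 2n-1$, and that (up to a shift of the grading by $\lambda_m$) only finitely many isomorphism types of local homology occur as $m$ ranges over the positive integers. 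Given these: if $\hat\lambda=0$ then $\lambda_m\in[0,n-1]$ for every $m$, so no iterate of $c$ contributes to $\beta_q$ once $q>3n-2$; and if $\hat\lambda>0$ then the constraint $\lambda_m\in[q-2n+1,q]$ confines $m$ to an interval of length $(4n-3)/\hat\lambda$, so at most $C(c):=\lceil(4n-3)/\hat\lambda\rceil+1$ iterates of $c$ contribute to any fixed $q$, and $c$ adds at most $K(c)\,C(c)$ to $\beta_q$, independently of $q$.

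Summing over the $k$ geodesics and using $\beta_q(M;\mathbb F)=0$ for $q>n$ gives $\beta_q\le\sum_j K(c_j)C(c_j)$ for all $q>\max(n,3n-2)$; since the remaining finitely many $\beta_q$ are finite, the whole sequence is bounded, contradicting the hypothesis. The step I expect to be the real obstacle is the second input above: proving that the local level homologies of the iterates $\bar c^{\,m}$ are uniformly controlled (equivalently, that up to the index shift they realise only finitely many types). This is exactly Gromoll and Meyer's analysis of the iteration of a degenerate closed geodesic, and I would cite it --- along with Bott's refined estimate $|\lambda_m-m\hat\lambda|\le n-1$ --- rather than reprove it; the Morse inequality and the two elementary counting arguments are then routine.
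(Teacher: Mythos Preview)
The paper does not give a proof of this theorem; it is stated in \S\ref{preli} as a known result of Gromoll and Meyer, with only the remark that it is ``a consequence of the above theorem'' (Theorem~\ref{GrM}) and a citation of \cite{GM69}. So there is no argument in the paper to compare your proposal against.

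Your outline is essentially the original Gromoll--Meyer argument and is correct in its architecture: the contrapositive, the Morse-type inequality coming from the filtration by $F$-sublevels, the Bott estimate $|\lambda_m-m\hat\lambda|\le n-1$ together with $\nu_m\le 2(n-1)$, the dichotomy on $\hat\lambda$, and---crucially---the fact that among the iterates $c^m$ only finitely many isomorphism types of (index-shifted) local level homology occur, giving the uniform bound $K(c)$. You correctly identify this last point as the substantive step and correctly attribute it to the Gromoll--Meyer local analysis in \cite{GM69}; the rest is indeed routine counting. One small remark: in the $\hat\lambda=0$ case your Morse inequality may have an infinite right-hand side for small $q$, but as you note this is harmless since you have already established (via the Serre spectral sequence for $\Omega M\to\Lambda M\to M$) that each individual $\beta_q$ is finite, and the inequality is only needed for $q>3n-2$.
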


In \cite{PMS77}, Vigué-Poirier and Sullivan showed that the hypothesis of the above theorem is 
satisfied when the manifold is sufficiently complicated, more precisely: 

\begin{theorem}\label{pois}
For a simply connected compact manifold $M$, then the cohomology algebra of $M$ with rational 
coefficients has at least two generators if and only if the Betti numbers of $\Lambda(M)$ with rational 
coefficients are unbounded.
\end{theorem}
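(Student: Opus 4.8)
The plan is to pass entirely to rational homotopy theory. Fix a Sullivan minimal model $(\Lambda V,d)$ of $M$; since $M$ is compact and simply connected, $V=V^{\geq 2}$ has finite type and $H^*(\Lambda V,d)\cong H^*(M;\mathbb{Q})$ is finite dimensional. By the standard cochain model for the free loop space, $H^*(\La(M);\mathbb{Q})$ is the cohomology of the commutative cochain algebra $(\Lambda V\otimes\Lambda\bar V,\,D)$, where $\bar V$ is a second copy of $V$ with all degrees lowered by one, $D|_V=d$, and $D\bar v=-S(dv)$ for the derivation $S$ determined by $Sv=\bar v$, $S\bar v=0$. The first step is the purely algebraic observation that, for compact simply connected $M$, the rational cohomology algebra is generated by a single element if and only if $\dim_{\mathbb{Q}}V^{\mathrm{odd}}\leq 1$: if $V^{\mathrm{odd}}$ is at most one-dimensional then $M$ is rationally elliptic with $\dim V\leq 2$ (using $\dim V^{\mathrm{even}}\leq\dim V^{\mathrm{odd}}$ for elliptic spaces), hence rationally an odd sphere or a $\mathbb{CP}^n$/$\mathbb{HP}^n$-type space, all of which are monogenic; conversely a monogenic $M$ realises precisely these two models. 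Thus Theorem~\ref{pois} becomes: $\beta_i(\La(M);\mathbb{Q})$ is bounded $\iff\dim_{\mathbb{Q}}V^{\mathrm{odd}}\leq 1$.

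For the easy implication I would simply compute the cohomology of the loop-space model in the two monogenic cases. If the model of $M$ is $(\Lambda(w),0)$ with $w$ odd, the loop-space model is $(\Lambda(w)\otimes\Lambda(\bar w),0)$ with cohomology $\Lambda(w)\otimes\mathbb{Q}[\bar w]$, so $\beta_i\leq 2$. If it is $(\Lambda(x,w),\,dx=0,\,dw=x^{k+1})$ with $x$ even, one computes $H^*\bigl(\Lambda(x,w)\otimes\Lambda(\bar x,\bar w),D\bigr)$ directly — a short, mechanical computation with $D\bar w=-(k+1)x^{k}\bar x$ and $D\bar x=0$ — and reads off that the Betti numbers are bounded. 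This disposes of the direction ``at most one generator $\Rightarrow$ bounded''.

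The substance is the converse: $\dim_{\mathbb{Q}}V^{\mathrm{odd}}\geq 2\Rightarrow\beta_i(\La(M);\mathbb{Q})$ unbounded. Here I would split along the Friedlander--Halperin dichotomy. If $M$ is rationally hyperbolic, then $\sum_{i\leq N}\dim_{\mathbb{Q}}\pi_i(M)\otimes\mathbb{Q}$, equivalently $\sum_{i\leq N}\beta_i(\Omega M;\mathbb{Q})$, grows exponentially in $N$ (Félix--Halperin--Thomas), and one transfers this growth to $\La(M)$ through the fibration $\Omega M\hookrightarrow\La(M)\xrightarrow{\mathrm{ev}}M$, whose fibre homology is so much larger than the fixed finite space $H^*(M;\mathbb{Q})$ that only boundedly many Serre differentials can interfere; a spectral-sequence comparison then forces $\beta_i(\La(M))$ to be unbounded. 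If $M$ is rationally elliptic but not monogenic, so $\dim V<\infty$ and $\dim V^{\mathrm{odd}}\geq 2$, I would exploit that $D$ preserves the number of barred letters, giving a decomposition $H^*(\La(M);\mathbb{Q})=\bigoplus_{k\geq 0}H^*\!\bigl(\Lambda V\otimes\Lambda^{k}\bar V,D\bigr)$; since $\bar V^{\mathrm{even}}=\overline{V^{\mathrm{odd}}}$ contains two even-degree generators $\bar v_1,\bar v_2$ (necessarily of positive degree), I would produce, for each $k$ and each $a+b=k$, a corrected cocycle representing $\bar v_1^{a}\bar v_2^{b}$ that survives in the word-length-$k$ summand, so that in any fixed range of topological degrees the number of surviving classes grows with $k$ — exactly the $\mathbb{Q}[\bar v_1,\bar v_2]$-factor one expects in the Poincaré series of $\La(M)$.

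The main obstacle is the last step: one must show that sufficiently many of the classes $\bar v_1^{a}\bar v_2^{b}$ (more precisely, of their appropriate corrections inside $\Lambda V\otimes\Lambda^{k}\bar V$) are \emph{not} coboundaries, which requires controlling $D$ precisely and, in the worst case, handling the fact that $v_1,v_2$ need not be $d$-closed. I expect the clean route is to replace $(\Lambda V,d)$ by an associated pure Sullivan model (filter $V$ so that $d$ lowers a chosen grading), for which $D$ restricted to the even barred generators becomes tractable and Halperin's structure theory for elliptic spaces applies, and then to transfer the non-vanishing back along the induced filtration spectral sequence; controlling that spectral sequence is the technical heart of the elliptic case, whereas the hyperbolic case is comparatively soft once the exponential lower bound on $H^*(\Omega M;\mathbb{Q})$ is granted. (When $M$ is formal one can bypass the model bookkeeping entirely: $H_*(\La(M);\mathbb{Q})\cong HH_*\bigl(H^*(M;\mathbb{Q})\bigr)$, and the statement reduces to the known fact that the Hochschild homology of a connected graded commutative algebra requiring at least two generators has unbounded-dimensional graded pieces.)
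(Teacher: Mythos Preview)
The paper does not prove Theorem~\ref{pois}. It appears in \S\ref{preli} (``Known results'') as a statement attributed to Vigu\'e-Poirrier and Sullivan with a bare citation to \cite{PMS77}, used only to delimit the scope of the Gromoll--Meyer theorem; there is no argument in the paper to compare your proposal against.

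For what it is worth, your plan is close in spirit to the original \cite{PMS77}: they too work in the Sullivan model $(\Lambda V\otimes\Lambda\bar V,D)$ of $\La M$ and exploit the grading by word length in $\bar V$. They do not, however, split along the elliptic/hyperbolic dichotomy (which postdates their work) --- the argument is uniform in the model. Two places in your outline would need tightening before it becomes a proof. First, the reduction ``$\dim V^{\mathrm{odd}}\le 1\Rightarrow M$ elliptic'' is true but not immediate: it requires the theorem that finite-dimensional $\pi_{\mathrm{odd}}(M)\otimes\mathbb{Q}$ together with finite-dimensional $H^*(M;\mathbb{Q})$ forces ellipticity. Second, your hyperbolic-case transfer through the Serre spectral sequence of $\Omega M\to\La M\to M$ is under-specified: exponential growth of the fibre homology does not by itself bound the differentials, and the constant-loop section does not force degeneration. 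You correctly flag the elliptic non-monogenic case as the crux; the original paper handles it by an explicit cocycle construction in the model rather than by passing to a pure Sullivan model, which is lighter than what you propose but goes in the same direction.
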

This is in particular true for globally symmetric spaces of rank $ > 1$ \cite{Zi77}.
\newline
Among the spaces which do not satisfy the assumptions of the Gromoll-Meyer theorem, are the 
symmetric spaces of rank $1$, which includes our most familiar spaces $n$-spheres. Below we state 
two theorems of N. Hingston \cite{Hi93, Hi97}, which can be applied to show that any Riemannian 
$2$-sphere admits infinitely many closed geodesics. 

\begin{theorem}\label{Hingston2}
Let $ \gamma$ be a closed geodesic on an $n$-dimensional compact manifold $M$ with 
$F(\gamma)=a$. With the notations as above, assume that $H_{\lambda_1 + \eta_1 + 1}( \La^{< a} 
\cup \gamma, \La^{< a}) \neq 0$ and $\lambda_m + \nu_m = m(\lambda_1 + \eta_1)- (n-1)(m-1)$ for 
all $m \geq 1$. Then $M$ has infinitely many closed geodesics.
\end{theorem}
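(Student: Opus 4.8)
The plan is to argue by contradiction, using the Chas-Sullivan product to turn the single top-degree class supplied by the hypothesis into an infinite family of non-trivial local level classes incompatible with $M$ having only finitely many closed geodesics. First I would set up the contradiction: assume $M$ has only finitely many geometrically distinct closed geodesics. Then every closed geodesic is an iterate of one of finitely many prime ones, so in particular each orbit $\overline{\gamma^m}$, $m\ge 1$, is an isolated critical orbit of $F$ and Theorem \ref{GrM} applies to all of them. Since $F=\sqrt E$ rescales linearly under iteration, $F(\gamma^m)=mF(\gamma)=ma$, so $\gamma^m$ sits at level $ma$ and $H_*(\La^{<ma}\cup\overline{\gamma^m},\La^{<ma})$ is concentrated in degrees $[\la_m,\la_m+\nu_m+1]$. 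Writing $d_m:=\la_m+\nu_m+1$, the hypothesis makes $d_m$ grow as slowly as Bott's estimate \eqref{indexgrowth} allows and, since $\la_m+\nu_m\ge 0$, forces $\la_1+\nu_1\ge n-1$.

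The heart of the proof is to propagate the given class. Let $0\ne\theta\in H_{d_1}(\La^{<a}\cup\overline\gamma,\La^{<a})$, and let $\bar\theta$ be its image in $H_*(\La,\La^0)$, the homology of the free loop space relative to the constant loops $\La^0$. The C-S product has degree $-n$, and by the compatibility of the loop product with the filtration by $F$ established in \cite{GH09}, the $m$-fold power $\bar\theta^{\,*m}$ is carried at level $\le ma$ and, when non-zero, lies in degree $md_1-(m-1)n$; a short computation, using the minimal-growth hypothesis, gives $md_1-(m-1)n=d_m$. Hence $\bar\theta^{\,*m}$, if non-zero, is a top-degree local class of $\gamma^m$ carried at level exactly $ma$. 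The crux is to show $\bar\theta^{\,*m}\ne 0$ for all $m$, i.e.\ that $\theta$ is not nilpotent for the C-S product. For this I would pass to Gromoll-Meyer finite-dimensional models for $\gamma$ and for each $\gamma^m$, realise the local loop product there as an intersection/Pontryagin-type product, and check that the product of the relevant top classes is again a top class; it is exactly the minimal growth of $\la_m+\nu_m$ that makes the negative bundles over $\overline\gamma$ and $\overline{\gamma^m}$ fit together so that no cancellation occurs. I expect this non-vanishing step to be the main obstacle.

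Granting non-nilpotency, every iterate $\gamma^m$ carries a non-zero local level homology class in degree $d_m$ at level $ma$. In the essential case $\la_1+\nu_1=n-1$ one computes $d_m\equiv n$ and $\la_m\le n-1$ for all $m$, so all the $\gamma^m$ are closed geodesics of bounded index carrying non-trivial local homology in the fixed degree range $[0,n]$. Since these are, by assumption, among the only closed geodesics of $M$, the level filtration on $H_*(\La,\La^0)$ has associated graded an infinite direct sum of non-zero groups lying in degrees $\le n$, so some Betti number $\beta_k(\La(M))$ with $k\le n$ is infinite, contradicting the Gromoll-Meyer theorem. When $\la_1+\nu_1>n-1$ the degrees $d_m$ are strictly increasing and more care is needed: one invokes the sharper Bott estimate \eqref{indexgrowth2} (equality in \eqref{indexgrowth} for all $m$ forces $\gamma$ or $\gamma^2$ to be degenerate) and again produces an unbounded Betti number; this endgame is the second delicate point. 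In either case the contradiction shows that $M$ has infinitely many geometrically distinct closed geodesics.
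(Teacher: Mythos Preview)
The paper does not prove this statement: Theorem~\ref{Hingston2} is quoted in \S\ref{preli} as a known result of Hingston \cite{Hi93,Hi97} and is used as a black box in the Corollary following the proofs of Theorems~\ref{second} and~\ref{third}. There is therefore no ``paper's own proof'' to compare with; Hingston's original argument predates string topology and is a direct Morse-theoretic construction, not a loop-product argument.

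Your proposal, on the other hand, tries to deduce Theorem~\ref{Hingston2} from the Chas--Sullivan product, and the central step is a genuine gap. You need that the given top-degree class $\theta$ is non-nilpotent for the local C--S product, but you only sketch this (``realise the local loop product as an intersection/Pontryagin-type product and check that the product of top classes is again a top class''). Notice that this is precisely the \emph{converse} of what the paper establishes: Theorem~\ref{second} shows that non-nilpotency \emph{implies} $j=\la_1+\nu_1+1$ and minimal growth of $\la_m+\nu_m$; you are assuming the latter and trying to deduce the former. The paper never proves that direction, and it is not a formality---minimal index growth is a necessary condition for the powers $x^{\bullet m}$ to land in the nonzero range of Theorem~\ref{GrM}, but it is not by itself sufficient to make them nonzero. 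Indeed, in the paper's own logic the Corollary uses Theorem~\ref{second} to reduce to the hypotheses of Theorem~\ref{Hingston2} and then \emph{invokes} Hingston's theorem; if your implication were available, that invocation would be redundant.

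Your endgame also needs tightening. From infinitely many nonzero local level groups in bounded degree you conclude an infinite Betti number of $\La$; but the local groups are only the $E_1$-page of the energy filtration spectral sequence, and cancellation can occur. You would need an additional argument (e.g.\ that the classes $\bar\theta^{\,*m}$ survive to $H_*(\La,\La^0)$, or a counting argument bounding how much cancellation is possible with finitely many prime geodesics) before reaching a contradiction.
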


 \begin{theorem}\label{Hingston1}
Let $ \gamma$ be a closed geodesic on an $n$-dimensional compact manifold $M$ with 
$F(\gamma)=a$. With the notations as above, assume that 
$ H^{\lambda_1 }( \La^{< a} \cup \gamma, \La^{< a}) \neq 0$ and $\lambda_m = m \lambda_1 + (n-1)
(m-1)$ for all $m \geq 1$. Then $M$ has infinitely many closed geodesics.
\end{theorem}

To the author's knowledge, the closed geodesic problem remains largely open for symmetric spaces 
of rank $1$. We refer readers to the survey articles \cite{BM10, AO14} for various other results and 
conjectures about the problem. 
\subsection{Level products}

In \cite{GH09}, the authors showed that the C-S product extends (compatible with the product on 
homology of $\La$) naturally to a product on level homologies as follows: 

\begin{equation} \label{ghext}
H_i(\Lambda^{\leq a}, \Lambda^{< a}) \times H_j(\Lambda^{\leq b}, \Lambda^{< b}) \ra H_{i+j-n}
(\Lambda^{\leq a+b}, \Lambda^{< a+b}).
\end{equation}
Similarly, the G-H product extends to level cohomologies as follows: 
\begin{equation}
H^i(\Lambda^{\leq a}, \Lambda^{< a}) \times H^j(\Lambda^{\leq b}, \Lambda^{< b}) \ra H^{i+j+n-1}
(\Lambda^{\leq a+b}, \Lambda^{< a+b}).
\end{equation}

 Let $A, B \subset \Lambda$ we write $ A \times_M B= (A \times B) \cap\Theta$. Let $\phi_{min}: 
(\Theta- \Lambda_0) \ra \Lambda$ defined by 
 $ \phi_{min}(\alpha, \beta)= \phi_s( \alpha, \beta)$ with $s=\frac{F (\alpha) } {F(\alpha) + F(\beta)}$, 
which extends continuously to $\Lambda_0$ giving a map $\phi_{min}: \Theta \ra \Lambda$, and 
define $A \bullet B= \Phi_{min}(A \times_M B)$. In particular, for any $\gamma \in \La$ and $m \geq 
1$, $ \overline{\gamma^m} \bullet \bar{\gamma} = \overline{\gamma^{m+1}}. $
 
The following is a restatement of Proposition 10.2 in \cite{GH09}. 
\begin{proposition} \label{localghprod}
 Let $A$ and $B$ be any closed subsets of $\La^{\leq a}- \Lambda_0$ and $\La^{\leq b}- 
\Lambda_0$ respectively, for $0 \leq a, b \leq \infty$. Then the G-H product extends to a product 
\[ H^i( \La^{\leq a}, \La^{\leq a} -A ) \times H^i( \La^{\leq b}, \La^{\leq b } - B ) \xrightarrow{\circledast} 
H^{i+j-n+1}( \La^{\leq a +b}, \La^{\leq a +b} -A \bullet B ).\] 
\end{proposition}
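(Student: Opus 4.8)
The plan is to reproduce the construction of the level Goresky--Hingston product from \cite{GH09} at the level of singular cochains, with $\La^{<a},\La^{<b},\La^{<a+b}$ replaced by $\La^{\le a}-A$, $\La^{\le b}-B$, $\La^{\le a+b}-A\bullet B$, and to keep track of supports throughout. Recall that the absolute product is induced by the cochain homotopy $P$, of degree $n-1$, determined by the family of degree-$n$ maps $P_t=\varGamma_t^{\#}\circ g\circ cut_{1/2}^{\#}$ for $t\in[0,1]$, where $g$ is a cochain model of the Gysin map of the codimension-$n$ embedding $\Theta_{1/2}\hookrightarrow\La$; one then precomposes with the cross product. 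I will use three features of this set-up: $g$ may be taken supported in an arbitrarily small tubular neighbourhood $N$ of $\Theta_{1/2}$ in $\La$; $P_0$ and $P_1$ annihilate every cochain that vanishes on constant loops; and $\phi_{min}$ is $F$-additive, $F(\phi_{min}(\alpha,\beta))=F(\alpha)+F(\beta)$, so it carries $A\times_M B$ into $A\bullet B\subset\La^{\le a+b}$.

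First I would fix representatives. By excision, $x$ and $y$ are represented by cocycles $u$ and $v$ supported in prescribed small neighbourhoods $U_A\supset A$ in $\La^{\le a}$ and $U_B\supset B$ in $\La^{\le b}$; then $u\times v$ is a relative cocycle in $C^{i+j}(\La^{\le a}\times\La^{\le b},(\La^{\le a}\times\La^{\le b})-A\times B)$, supported in $U_A\times U_B$. Next I would apply $P$ to $u\times v$. Since $A$ and $B$ are disjoint from $\La_0$, each relative cochain above vanishes on constant loops, so $P_0(u\times v)=P_1(u\times v)=0$; hence $P\delta+\delta P=0$ on these cochains and $P(u\times v)$ is a cocycle of degree $i+j+n-1$, exactly as in the absolute case.

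Then comes the heart of the argument: locating the support of $P(u\times v)$ inside $\La^{\le a+b}$. If $P(u\times v)$ is nonzero on a chain $\sigma$ in $\La^{\le a+b}$, then for some $t$ the chain $\varGamma_t(\sigma)$ meets $N$ in loops $\mu$ with $cut_{1/2}(\mu)=(\mu_1,\mu_2)\in U_A\times U_B$; writing $\eta=\varGamma_t^{-1}(\mu)$ and using $cut_{1/2}\circ\varGamma_t=cut_t$, one finds $\eta=\phi_t(\mu_1,\mu_2)$ with $\mu_1\in U_A$, $\mu_2\in U_B$. Now $F(\eta)\le a+b$, while $F(\phi_r(\mu_1,\mu_2))^2=F(\mu_1)^2/r+F(\mu_2)^2/(1-r)$ is strictly convex in $r$ with minimum value $F(\mu_1)+F(\mu_2)$ attained at $r=F(\mu_1)/(F(\mu_1)+F(\mu_2))$; as $N$, $U_A$, $U_B$ shrink, the admissible cut parameters $t$ collapse onto that optimum, so $\eta$ is forced uniformly close to $\phi_{min}(A\times_M B)=A\bullet B$. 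Sliding the cut parameter monotonically to the energy-balancing value is an $F$-nonincreasing deformation of $\La^{\le a+b}$ taking these loops into $A\bullet B$; a deformation/excision argument with this homotopy then shows that $[P(u\times v)]$ lifts to a well-defined class $x\circledast y$ in $H^{i+j+n-1}(\La^{\le a+b},\La^{\le a+b}-A\bullet B)$.

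Finally I would verify the routine bookkeeping: independence of the representatives $u,v$, of the neighbourhood $N$, and of the reparametrization homotopy $\varGamma_t$ follows from the usual prism argument of \cite{GH09}; naturality under $A\hookrightarrow\La^{\le a}$, $B\hookrightarrow\La^{\le b}$, $A\bullet B\hookrightarrow\La^{\le a+b}$ shows the class refines the absolute Goresky--Hingston product and is compatible with the level product of \cite{GH09}; and orientability issues for the normal bundle of $\Theta_{1/2}$ are absorbed into the coefficient Convention. I expect the genuine obstacle to be precisely the support/level step above: showing rigorously that the part of $P(u\times v)$ carried by non-optimal concatenations of $A$-loops with $B$-loops that still lie in $\La^{\le a+b}$ can be absorbed by an explicit $F$-nonincreasing deformation --- in effect a gradient-type flow that slides the marked cut point to the energy-balancing one --- and that this deformation is compatible with the cochain-level maps $cut_{1/2}^{\#}$, $g$, $\varGamma_t^{\#}$ out of which $P$ is built. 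The rest is formal manipulation of cross products, Gysin maps and prism operators.
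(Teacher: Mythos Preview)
The paper does not give its own proof of this proposition: it is stated as ``a restatement of Proposition 10.2 in \cite{GH09}'' and no argument is supplied. So there is nothing in the paper to compare your proposal against beyond the citation.

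That said, your outline is a faithful reconstruction of the Goresky--Hingston argument. You correctly identify the three ingredients that make the relative version go through: excision to localise the cochain representatives near $A$ and $B$; the vanishing of $P_0$ and $P_1$ on cochains relative to $\La_0$, which uses $A,B\subset\La-\La_0$; and the convexity of $r\mapsto F(\phi_r(\mu_1,\mu_2))^2=F(\mu_1)^2/r+F(\mu_2)^2/(1-r)$, which forces the cut parameter to the energy-balancing value and pins the support of $P(u\times v)$ near $A\bullet B=\phi_{min}(A\times_M B)$. Your honest flag on the ``genuine obstacle'' is well placed: the delicate part in \cite{GH09} is exactly the deformation that slides non-optimal concatenations to optimal ones without increasing $F$, and checking compatibility with the cochain-level maps. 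If you want to turn your sketch into a self-contained proof, that is where the work lies; otherwise, citing Proposition~10.2 of \cite{GH09} as the paper does is entirely appropriate.
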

For an isolated closed geodesic $ \gamma$ with $F(\gamma)=a$, using lemma 4.2.3 in \cite{Kl78}, 
we have \newline
\[ H^{*}( \La^{< a} \cup \bar{\gamma}, \La^{< a} ) \cong H^{*}( \La^{\leq a}, \La^{\leq a} -
\bar{\gamma} ). \]

Assume that all the orbits of iterates of $\gamma$, $\bar{\gamma^m}$ are isolated. Then, taking $A= 
\overline{\gamma^m}$ and $ B= \bar{\gamma} $ in the above proposition gives 
\begin{equation} \label{locextgh}
 H^{i}( \La^{< ma} \cup \overline{\gamma^{m}}, \La^{<m a} ) \times H^{j}( \La^{< a} \cup 
\bar{\gamma}, \La^{< a} ) \xrightarrow{\circledast} H^{i+j+n-1}( \La^{< (m+1) a} \cup 
\overline{\gamma^{m+1}}, \La^{< (m+1) a} ).
\end{equation}

Similarly, it follows from Proposition 5.3 in \cite{GH09}( as in Theorem 11.3 for nondegenerate case), 
that the C-S product extends to 
\begin{equation} \label{locextcs} H_{i}( \La^{< ma} \cup \overline{\gamma^{m}}, \La^{<m a} ) \times 
H_{j}( \La^{< a} \cup \bar{\gamma}, \La^{< a} ) \xrightarrow{\bullet} H_{i+j-n}( \La^{< (m+1) a} \cup 
\overline{\gamma^{m+1}}, \La^{< (m+1) a} ). \end{equation}

Now we prove two theorems mentioned in the introduction.

 \begin{theorem} \label{second}
Let $\gamma$ be a closed geodesic, and assume that the orbits of all its iterates are isolated. If $
\gamma$ admits a nonnilpotent local level homology class with respect to the C-S product then $ 
\mathrm{index}(\gamma^m)+ \mathrm{nullity} (\gamma^m)= m \ (\mathrm{index} (\gamma)+ 
\mathrm{nullity}(\gamma))- (m-1)(n-1)$. 
\end{theorem}

\begin{proof}
 Let $x \in H_{j}
( \Lambda^{<a} \cup \gamma, \Lambda^{< a})$ be a nonnilpotent level homology class of $
\gamma$, where $a= F(\gamma)$. By \ref{locextcs}, this means

 \[ x^{\bullet m} \in H_{mj-n(m-1)}( \La^{< (m+1) a} \cup \overline{\gamma^{m}}, \La^{< (m) a}) \] is a 
non-trivial class for all $m \geq 1$. Denote by $\lambda_m= \mathrm{index}(\gamma), \nu_m= 
\mathrm{nullity}(\gamma)$. It follows then from the Gromoll and Meyer theorem \ref{GrM} that

\begin{equation} \label{average0} \lambda_m \leq mj - n(m-1) \leq \lambda_m+ \nu_m+ 1 \ \mbox{for 
all} \ m \geq 1, \end{equation}
and in particular,
\begin{equation} \label{average1} \lambda_1 \leq j \leq \lambda_1+ \nu_1+ 1. \end{equation}

Since $\nu_m \leq 2n-1$ for all $m$, it follows that the average index of $\gamma$ 

\begin{equation}\label{average2} \bar{\lambda} = \lim_{m \ra \infty} \frac{ \lambda_{ m}}{m} = j-n. 
\end{equation}

On the other hand, we have the following from Bott's index estimate \ref{indexgrowth}
\[m (\lambda_1 + \nu_1 ) -(m-1) (n-1) \leq \lambda_m + \nu_m \ \mathrm{for \ all} \ m \geq 1,\]
 (as index and nullity are natural numbers). This implies 
\begin{equation} \label{average3} \lambda_1 + \nu_1 -n+1 \leq \bar{\lambda}. 
\end{equation}

Combining the above equations \ref{average1}, \ref{average2} and \ref{average3}, we get \[ j= 
\lambda_1 + \nu_1 +1.\]

So Equation \ref{average1} can be rewritten as 
\begin{equation} \label{average4}
\lambda_m \leq m (\lambda_1 + \nu_1) - (m-1)(n-1) +1 \leq \lambda_m + \nu_m +1.
\end{equation}

 Now we claim that
 \[ \lambda_m + \nu_m = m (\lambda_1 + \nu_1) - (m-1)(n-1) \ \mathrm{for \ all} \ m \geq 1.\]
 
 Suppose our claim is false, then it follows again from \ref{indexgrowth} that for some positive 
integers $k$ and $C$ we have 
 \[ \lambda_k + \nu_k = k (\lambda_1 + \nu_1) - (k-1)(n-1) +C.\]
 
 Then for any positive integer $r$, we have 
 
\[ \lambda_{rk} + \nu_{rk} \geq r(k (\lambda_1 + \nu_1) - (k-1)(n-1) +C) -(r-1) (n-1) \]
\[ \implies \lambda_{rk} \geq rk (\lambda_1 + \nu_1) - (rk-1)(n-1) +rC - \nu_{rk}. \]

This contradicts the first inequality of Equation \ref{average4} for large $r$. This proves our claim, 
hence completes the proof of the theorem.

\end{proof}

\begin{theorem} \label{third}
Let $\gamma$ be a closed geodesic, and assume that the orbits of all its iterates are isolated. If $
\gamma$ admits a nonnilpotent local level cohomology class $x$ with respect to the G-H product then $
\dim(x)=\mathrm{index}(\gamma)$ and $\mathrm{index}(\gamma^m)= m \ \mathrm{index}(\gamma) + 
(m-1)(n-1)$.\end{theorem}

\begin{proof}
 Let $ x \in H^{j}( \Lambda^{<a} \cup \bar{\gamma}, \Lambda^{< a})$ be a nonnilpotent level 
cohomology class produced by $\gamma$, where 
$a=F(\gamma)$. By \ref{locextcs}, this means

 \[ x^{\circledast m} \in H^ {mj + (m-1)(n-1)}( \La^{< (m+1) a} \cup \overline{\gamma^{m}}, \La^{< (m) 
a}) \] is a non-trivial class for all $m \geq 1$. It follows then from the theorem of Gromoll and Meyer 
\ref{GrM} that
\begin{equation} \label{bott1} \lambda_m \leq mj + (m-1)(n-1) \leq \lambda_m+ \nu_m+ 1 \ \mbox{for 
all} \ m \geq 1, \end{equation} 

and in particular,
\begin{equation} \label{average7} \lambda_1 \leq j \leq \lambda_1+ \nu_1+ 1. \end{equation}

 It follows that the average index of $\gamma$ 

\begin{equation}\label{average5} 
\bar{\lambda} = j +n-1
\end{equation}

On the other hand, we have the following from Bott's index estimate \ref{indexgrowth} 

\[ \lambda_m \leq m \lambda_1 + (m-1) (n-1) \ \mathrm{for \ all} \ m \geq 1,\]
which implies 
\begin{equation} \label{average6} \bar{\lambda} \leq \lambda_1 + n-1. 
\end{equation}

Combining the equations \ref{average7}, \ref{average5} and \ref{average6}, we get 
\[ j=\lambda_1= \mathrm{index}(\gamma).\]

Using a very similar argument as in the last theorem, we can then prove that $\gamma$ has maximal index growth, i.e., \[ \hspace{2mm} 
\lambda_m = 
m \lambda_1 + (m-1)(n-1) \hspace{2mm} \mbox{for all }m \geq 1. \]
\end{proof}

We have the following corollary. 
\begin{corollary} Let $M$ be a compact Riemannian manifold with $F: \Lambda \ra \r$ defined as 
above. If there is a nonnilpotent level homology class with respect to the C-S product then $M$ has 
infinitely many closed geodesics. An analogous statement holds true for cohomology with the G-H 
product. 

\end{corollary}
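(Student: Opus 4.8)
The plan is to argue by contradiction. Suppose $M$ has only finitely many geometrically distinct closed geodesics. Then every closed geodesic --- and every iterate of one --- has an isolated orbit, so the local level (co)homology groups $H_*(\La^{<a}\cup\bar\gamma,\La^{<a})$ are defined for all closed geodesics $\gamma$; moreover only finitely many critical orbits occur at any given level $a$, so the deformation and excision arguments recalled in \S\ref{loopy} (see \cite{GH09,Kl78}) identify $H_*(\La^{\leq a},\La^{<a})$ with the direct sum of the local level homologies over those orbits, compatibly with the product, and likewise in cohomology. The goal is then to feed a suitable closed geodesic into Hingston's Theorem \ref{Hingston2} (respectively \ref{Hingston1}), whose conclusion will contradict the finiteness assumption.

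Let $x\in H_j(\La^{\leq a},\La^{<a})$ be non-nilpotent for the Chas--Sullivan product, so $x^{\bullet m}\neq 0$ for all $m\geq 1$. The crucial --- and, I expect, hardest --- step is to promote $x$ to a non-nilpotent \emph{local} level homology class of a single closed geodesic. The obstruction is that $x$ need not be supported on one orbit, and that the C-S product of the orbits of two distinct primitive geodesics is not a closed geodesic orbit, so a priori the non-triviality of the powers $x^{\bullet m}$ might spread out among the finitely many primitive geodesics. I would handle this by the level-and-index bookkeeping already used in the proof of Theorem \ref{second}: each $x^{\bullet m}$ lies at level $ma$ and therefore has a non-zero component on the orbit of some iterate of one of the finitely many primitive geodesics; a pigeonhole argument over those primitives, after restricting to a suitable infinite progression of iteration exponents so that the chosen orbits are all iterates of one fixed geodesic $\gamma$, together with the multiplicativity of the local extension \ref{locextcs}, then isolates a genuine non-nilpotent class $y\in H_*(\La^{<b}\cup\bar\gamma,\La^{<b})$ with $b=F(\gamma)$.

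With such a $\gamma$ and $y$ in hand the rest is immediate. Theorem \ref{second} applies and yields $\deg y=\mathrm{index}(\gamma)+\mathrm{nullity}(\gamma)+1$ together with $\mathrm{index}(\gamma^m)+\mathrm{nullity}(\gamma^m)=m(\mathrm{index}(\gamma)+\mathrm{nullity}(\gamma))-(m-1)(n-1)$ for all $m\geq 1$; in particular $y$ is a non-zero class in $H_{\lambda_1+\nu_1+1}(\La^{<b}\cup\bar\gamma,\La^{<b})$, so the hypotheses of Hingston's Theorem \ref{Hingston2} hold and $M$ admits infinitely many closed geodesics --- contradicting our assumption. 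For the cohomological statement one runs the same argument with the Goresky--Hingston product and its local extension \ref{locextgh}, invoking Theorem \ref{third} (which produces a geodesic $\gamma$ with a non-zero class in $H^{\mathrm{index}(\gamma)}(\La^{<b}\cup\bar\gamma,\La^{<b})$ and $\mathrm{index}(\gamma^m)=m\,\mathrm{index}(\gamma)+(m-1)(n-1)$) and Hingston's Theorem \ref{Hingston1} in place of \ref{Hingston2}.
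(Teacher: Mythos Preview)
Your overall plan---reduce to a non-nilpotent \emph{local} class, apply Theorem~\ref{second} (resp.\ \ref{third}), then invoke Hingston's Theorem~\ref{Hingston2} (resp.\ \ref{Hingston1})---is exactly the paper's. The only difference is the reduction step.

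You flag the promotion of $x$ to a local non-nilpotent class as ``the crucial---and hardest---step'' and sketch a pigeonhole argument over the finitely many primitives. In fact this step is immediate. Writing $x=\sum_{i} x_i$ under the decomposition \eqref{pencor}, note that for $i\neq j$ the set $\bar\gamma_i\bullet\bar\gamma_j$ consists of concatenations of two \emph{distinct} geodesic orbits at level $a$; such loops have corners, hence contain no critical point of $F$, and the gradient flow retracts $\La^{<2a}\cup(\bar\gamma_i\bullet\bar\gamma_j)$ onto $\La^{<2a}$. By the local form of the product (Proposition~5.3 of \cite{GH09}, as in \eqref{locextcs}) the class $x_i\bullet x_j$ lives in this trivial group, so all cross terms vanish and $x^{\bullet m}=\sum_i x_i^{\bullet m}$; non-nilpotence of $x$ forces some $x_k$ to be non-nilpotent. (Equivalently, with no geometry at all: the level C--S product is associative and graded commutative, so a finite sum of nilpotents is nilpotent.) The paper simply asserts this conclusion in one line.

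Your pigeonhole sketch, by contrast, is not clearly correct as written: knowing that $x^{\bullet m}$ has a nonzero component on some iterate $\overline{\gamma^{k_m}}$ for infinitely many $m$ does not by itself produce a non-nilpotent element of $H_*(\La^{<b}\cup\bar\gamma,\La^{<b})$, and the appeal to ``multiplicativity of the local extension'' does not bridge that gap. So keep your global contradiction setup (assuming only finitely many closed geodesics, so that every iterate is isolated---this is actually a bit cleaner than the paper's case split at the single level $a$), but replace the middle argument by the simple cross-term vanishing above.
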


\begin{proof}
Let $ x \in H_*(\La^{\leq a}, \La^{< a})$ be a nonnilpotent level homology class, for some $a \geq 0$. 
Suppose that there are only finitely many geodesics $\gamma_1, \gamma_2, \cdots, \gamma_{r} $ 
such that $F(\gamma_i)=a$ for $i=1, 2, \cdots, r$, so they are obviously isolated. Consequently,
\begin{equation}\label{pencor} H_*(\Lambda^{\leq a}, \Lambda^{<a})\cong \oplus_{i=1}^r 
H_*(\Lambda^{<a} \cup \bar{\gamma_i}, \Lambda^{<a}). \end{equation}
Write $x= \sum_{i=1}^rx_i$, then there must exist a nonnilpotent class 
\[ x_k \in H_*(\Lambda^{<a} \cup \bar{\gamma_k}, \Lambda^{<a}),\]
for some $\gamma_k$, $1 \leq k \leq r$. Then by Theorem \ref{second}, $\gamma_k$ satisfies the 
hypothesis of Theorem \ref{Hingston2}, therefore, $M$ has infinitely many closed geodesics.
It is easy to see that if there are infinitely many geodesics at level $a$ then $M$ has infinitely many 
closed geodesics. 
\newline
A very similar argument using Theorem \ref{third} for cohomology can be made to show the 
existence of a geodesic satisfying the hypothesis of Theorem \ref{Hingston1}. Therefore, $M$ has 
infinitely many closed geodesics. This completes the proof. 
\end{proof}
The above corollary, in particular, gives a proof of two theorems, whose statement appeared in \cite{GH09}, (Theorems 12.6 and 12.7) but the proofs are not found anywhere in the literature. Here we note that our proof will require the additional assumption that the closed geodesic along with its iterates are isolated. To the author's understanding this assumption is necessary. 
\newline
The following corollary gives a simpler proof of Theorems 7.3 and 11.3 in \cite{GH09} put 
together.

\begin{corollary} Let $M$ be a compact manifold. Then for a generic Riemannian metric on $M$, with 
$F: \Lambda \ra \r$ defined as above, for every $a \geq 0$, every local level homology class $ x \in 
H_*(\La^{< a} \cup \bar{\gamma}, \La^{< a})$ is level-nilpotent. The analogous statement holds true for 
cohomology. 
\end{corollary}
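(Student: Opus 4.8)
The plan is to deduce the statement from the index growth rigidity provided by Theorems \ref{second} and \ref{third}, and then to exclude that rigidity, for a bumpy metric, by means of the refined Bott estimate \ref{indexgrowth2}.

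First I would fix the meaning of ``generic''. By the bumpy metric theorem of Abraham and Anosov, the set of $C^{\infty}$ Riemannian metrics on $M$ all of whose closed geodesics are nondegenerate is residual in the space of all metrics; take a metric from this set. Then every closed geodesic $\gamma$, together with all of its iterates $\gamma^{m}$, is nondegenerate, hence isolated by the Gromoll--Meyer normal form. In particular the orbits of all iterates of every $\gamma$ are isolated, so Theorems \ref{second} and \ref{third} and the local extensions \ref{locextgh}, \ref{locextcs} apply to every $\gamma$, and moreover $\nu_{m}=0$ for all $m$.

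The heart of the argument is a contradiction. Suppose that for such a metric there were an $a\geq 0$ and a closed geodesic $\gamma$ with $F(\gamma)=a$ carrying a class in $H_{*}(\La^{<a}\cup\bar{\gamma},\La^{<a})$ that is not level-nilpotent with respect to the C-S product; equivalently, $\gamma$ admits a non-nilpotent local level homology class. Theorem \ref{second} then forces $\lambda_{m}+\nu_{m}=m(\lambda_{1}+\nu_{1})-(m-1)(n-1)$ for all $m$, which in view of $\nu_{m}\equiv 0$ is the minimal growth $\lambda_{m}=m\lambda_{1}-(m-1)(n-1)$. On the other hand, $\gamma$ and $\gamma^{2}$ are nondegenerate, so the second line of \ref{indexgrowth2} with $k=1$ yields a constant $C_{1}$ such that $\lambda_{m}+\nu_{m}\geq m(\lambda_{1}+\nu_{1})-(m-1)(n-1)+1$, i.e.\ $\lambda_{m}\geq m\lambda_{1}-(m-1)(n-1)+1$, for all $m\geq C_{1}$ --- contradicting the line just obtained at, say, $m=C_{1}$. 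Hence every such class is level-nilpotent. The cohomological half is identical, with Theorem \ref{third} replacing Theorem \ref{second}: a non-nilpotent G-H class would force the maximal growth $\lambda_{m}=m\lambda_{1}+(m-1)(n-1)$, which the first line of \ref{indexgrowth2} with $k=1$ rules out for large $m$.

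This is bookkeeping more than anything, so there is no single hard step; the point that needs care is that ``generic'' must be chosen strong enough to supply, simultaneously, (i) isolatedness of the orbits of all iterates --- without which Theorems \ref{second} and \ref{third} and the local products \ref{locextgh}--\ref{locextcs} are not even available --- and (ii) nondegeneracy of $\gamma$ and $\gamma^{2}$, which is exactly what makes the nullities disappear from \ref{indexgrowth2} so that it can be pitted against the rigid growth formulas. A bumpy metric delivers both at once. Finally, the case $\dim M=1$ (where $M=S^{1}$ has a single geometrically distinct closed geodesic and \ref{indexgrowth2} provides no strict improvement) is degenerate and should be understood as excluded.
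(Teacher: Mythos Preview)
Your argument is correct and follows essentially the same route as the paper: a bumpy (generic) metric makes every closed geodesic and all its iterates nondegenerate and hence isolated, so Theorems \ref{second} and \ref{third} would force extremal index growth from any non-nilpotent local class, while the refined Bott estimate \ref{indexgrowth2} (applicable since $\gamma$ and $\gamma^{2}$ are nondegenerate) forbids such growth for large $m$. Your write-up is in fact more careful than the paper's, which only mentions that the growth ``can not be maximal'' and leaves the minimal-growth case for the C-S product implicit; your remark on $n=1$ is also a reasonable caveat.
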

\begin{proof}
For a generic Riemannian metric (also known as bumpy metric) on a manifold $M$, all closed geodesics 
are nondegenerate \cite{Kl78}. By Bott's index estimate \ref{indexgrowth2}, the indices of iterates of any closed 
geodesic can not be the maximum possible. Nondegenerate closed geodesics are obviously isolated. Then by the 
above theorem, any local level homology or cohomology class of any geodesic is nilpotent. Then the 
proof is concluded using \ref{pencor}.
\end{proof}

\section{Acknowledgement}
The author was supported by the National Board of Higher Mathematics (No.2018/R\&D-II/ 8872) 
during the academic year 2018--2019.

\bibliographystyle{amsplain}
\bibliography{all}
\end{document}